\documentclass[a4paper,10pt,spanish]{article}
\setlength{\parskip}{0.1cm}

\usepackage{amsthm}
\usepackage[ansinew]{inputenc}
\usepackage[english]{babel}
\usepackage{anysize}
\usepackage{amsfonts}
\usepackage{amssymb}
\usepackage{amsmath}
\usepackage{doc}
\usepackage{exscale}
\usepackage{fontenc}
\usepackage{ifthen}
\usepackage{latexsym}
\usepackage{makeidx}
\usepackage{syntonly}
\usepackage{graphicx}
\usepackage{float}
\usepackage{array}
\usepackage[all]{xy}
\usepackage[usenames,dvipsnames]{color}

\marginsize{2.5cm}{2.5cm}{2.5cm}{2.5cm}

\pagestyle{plain}
\newtheorem{thm}{Theorem}[section]
\newtheorem{cor}[thm]{Corollary}
\newtheorem{lem}[thm]{Lemma}
\newtheorem{pro}[thm]{Proposition}
\newtheorem{df}[thm]{Definition}
\newtheorem{rmk}[thm]{Remark}
\newtheorem{que}[thm]{Question}
\newtheorem{ex}[thm]{Example}
\newtheorem{lem-df}[thm]{Lemma-Definition}
\newtheorem{conj}[thm]{Conjecture}
\newtheorem{rmk-df}[thm]{Remark-Definition}

\newcommand{\beq}{\begin{equation}}
\newcommand{\enq}{\end{equation}}
\newcommand{\beqn}{\begin{equation*}}
\newcommand{\enqn}{\end{equation*}}

\newcommand{\ra}{\rightarrow}

\newcommand{\Ra}{\Rightarrow}

\newcommand{\longra}{\longrightarrow}

\newcommand{\mC}{\mathbb{C}}

\newcommand{\mG}{\mathbb{G}}
\newcommand{\mH}{\mathbb{H}}

\newcommand{\mP}{\mathbb{P}}

\newcommand{\mQ}{\mathbb{Q}}

\newcommand{\caC}{\mathcal{C}}

\newcommand{\caF}{\mathcal{F}}
\newcommand{\caG}{\mathcal{G}}
\newcommand{\caH}{\mathcal{H}}

\newcommand{\caK}{\mathcal{K}}

\newcommand{\caO}{\mathcal{O}}

\DeclareMathOperator{\caExt}{{\mathcal Ext}}
\DeclareMathOperator{\caHom}{{\mathcal Hom}}

\DeclareMathOperator{\coker}{coker}
\DeclareMathOperator{\im}{im}

\DeclareMathOperator{\codim}{codim}
\DeclareMathOperator{\Pic}{Pic}

\DeclareMathOperator{\Sym}{Sym}

\DeclareMathOperator{\Supp}{Supp}
\DeclareMathOperator{\rk}{rk}

\title{Grassmannian BGG Complexes and Hodge numbers of irregular varieties}
\author{V\'{\i}ctor Gonz\'{a}lez-Alonso
\footnote{Previous address: Departament de Matem\`atica Aplicada I, Universitat Polit\`ecnica de Catalunya (BarcelonaTech), Av. Diagonal 647, 08028 Barcelona, Spain.}
\thanks{The author developed this work supported by grant FPU-AP2008-01849 of the Ministerio de Educaci\'{o}n, project 2009-SGR-1284 of the Generalitat de Catalunya, and projects MTM2009-14163-C02-02 and MTM2012-38122-C03-01/FEDER of the Ministerio de Econom{\'\i}a y Competitividad.}\\
\small{Instit\"ut fur Algebraische Geometrie, Leibniz Universit\"at Hannover}\\
\small{Welfengarten 1, 30167 Hannover, Germany} \\
\small{gonzalez@math.uni-hannover.de}
}
\date{\today}

\begin{document}

\maketitle

\begin{abstract}
In this paper we investigate the exactness of the Grassmannian BGG complexes introduced in \cite{VGA1}, and obtain some inequalities between some Hodge numbers of some irregular varieties. In particular, we obtain sharp lower bounds for the Hodge numbers of smooth subvarieties of Abelian varieties, as well as some improvements of results of Lazarsfeld and Popa \cite{LP} and Lombardi \cite{Lom} concerning threefolds and fourfolds.

AMS Mathematics Subject Classification (2010): 32J27 (primary), 58A14, 14C30 (secondary).
\end{abstract}

\section{Introduction and preliminaries}
\label{sect:intro}

In the classification of higher dimensional algebraic varieties, it is interesting to give numerical conditions that imply the existence of some special geometric structure, as for example, a fibration over another variety of smaller dimension. A paradigmatical example is the classical Castelnuovo-de Franchis inequality, which says that if the geometric genus $p_g\left(S\right)$ and the irregularity $q\left(S\right)$ of an irregular surface $S$ satisfy
\beq \label{CdF-ineq-intro}
p_g\left(S\right) \leq 2q\left(S\right)-4,
\enq
then there exists a fibration $f: S \ra C$ over a smooth curve of genus $g\left(C\right) \geq 2$.

The Castelnuovo-de Franchis inequality (\ref{CdF-ineq-intro}) admits several generalizations to higher dimensions. On the one hand, the {\em Generalized Castelnuovo-de Franchis Theorem}, proved independently by Ran \cite{Ran} and Catanese \cite{Cat}, implies that any irregular variety $X$ without {\em higher irrational pencils}\footnote{Fibrations over a variety of Albanese general type (of maximal Albanese dimension but non-surjective Albanese map). They are higher dimensional analogues to fibrations over curves of genus $g \geq 2$.} satisfies
\beqn
h^{k,0}\left(X\right) > k\left(q\left(X\right)-k\right)
\enqn
for every $k = 1, \ldots, \dim X$. On the other hand, the study of the relations between derivative complexes, cohomological support loci $V^i\left(X,\omega_X\right)$ and irregular fibrations carried out by Green and Lazarsfeld in \cite{GL1,GL2} lead to the inequality
\beqn
\chi\left(X,\omega_X\right) \geq q\left(X\right)-\dim X
\enqn
for a variety $X$ with no higher irrational pencil. This inequality was first obtained by Pareschi and Popa \cite{PP} using the Fourier-Mukai transform, and later by Lazarsfeld and Popa \cite{LP} by means of the BGG complex of $X$. As a third generalization, in \cite{VGA1} we proved that a variety $X$ with no higher irrational pencil and irregularity $q\left(X\right) \geq 2\dim X$ satisfies
\beq \label{VGA1-ineq-intro}
h^{2,0}\left(X\right) \geq 2 \left(\dim X - 1\right)q\left(X\right) - \binom{2\dim X - 1}{2},
\enq
while if $q \left(X\right) < 2 \dim X$ we recovered the bound $h^{2,0}\left(X\right) \geq \binom{q\left(X\right)}{2}$ obtained by Causin and Pirola \cite{CP}. The main tool used in \cite{VGA1} is the {\em Grassmannian BGG complex}, made up by glueing the {\em higher rank derivative complexes} (cf. Section \ref{sect:higher}).

In this paper we slightly generalize these complexes, and prove some general results about their exactness (based on an idea of Green and Lazarsfeld in \cite{GL1}). As a consequence, we obtain a bunch of inequalities between the Hodge numbers of varieties admitting {\em non-degenerate} subspaces of holomorphic 1-forms. Then in Section \ref{sect:subvAV} we consider the special case of smooth subvarieties of Abelian varieties (or more generally, \'etale covers of them), provinding examples that show that most of the new inequalities are sharp. Finally, in Section \ref{sect:h20} we apply some ideas of Lazarsfeld and Popa \cite{LP} and Lombardi \cite{Lom} to our complexes, obtaining inequalities for the Hodge numbers. In this way we improve some inequalities of Lombardi \cite{Lom} for threefolds and fourfolds, and also recover (\ref{VGA1-ineq-intro}) with slightly stronger hypothesis than in \cite{VGA1}.

{\bf Some notation and definitions:} Through all the paper, $X$ will denote a complex smooth irregular projective (or more generally, compact K\"ahler) variety of dimension $d = \dim X$ and irregularity $q = q\left(X\right)$. Quite often, for the sake of brevity, we will denote by $V = H^0\left(X,\Omega_X^1\right)$ the space of holomorphic 1-forms on $X$.

If $E$ is a vector space (or a vector bundle over some variety), we will denote by $\Sym^r E$ its $r$-th symmetric power, which we consider both as a quotient and as a subspace of $E^{\otimes r}$. Its elements will be written with multiplicative notation, denoting by $e_1\cdots e_r$ and $e_1^r$ the classes of $e_1 \otimes \cdots \otimes e_r$ or $e_1^{\otimes r}$ respectively ($e_i \in E$).

We will denote by $\mG_k = Gr\left(k,V\right)$ the Grassmannian of $k$-dimensional subspaces of $V$, by $S \subset \mG_k \times V$ the tautological subbundle of $\mG_k$, and by $Q=\left(\mG_k \times V\right)/S$ its tautological quotient bundle. For some explicit computations in the cohomology algebra of $\mG_k$, we will use the usual notation for Schubert classes: if $\lambda=(q-k\geq\lambda_1\geq\lambda_2\geq\cdots\geq\lambda_k\geq0)$ is a partition, $\sigma_{\lambda}$ will be the cohomology class of the Schubert cycle
\beqn
\Sigma_{\lambda} = \left\{W\in\mG_k \,\vert\,\dim\left(W\cap\mC\left\langle v_1,\ldots,v_{q-k+i-\lambda_i}\right\rangle\right)\geq i \right\},
\enqn
which is independent of the basis $\left\{v_1,\ldots,v_q\right\}$ of $V$ chosen.

Finally, we will often use the following definition for complexes of vector spaces.
\begin{df}
We say that a complex of vector spaces
$$0 \longra V_0 \stackrel{\phi_0}{\longra} V_1 \stackrel{\phi_1}{\longra} \cdots \longra V_k \stackrel{\phi_k}{\longra} \cdots$$
is {\em exact in the first $n$ steps} if the truncated complex
$$0 \longra V_0 \longra V_1 \longra \cdots \longra V_n$$
is exact, or equivalently, if the (co)homology groups $H^i = \ker \phi_i / \im \phi_{i-1}$ vanish for $i < n$.
\end{df}

{\bf Acknowledgements:} I would like to thank Gian Pietro Pirola and Mihnea Popa for the many useful discussions and their hospitality during the stays that led to this work. I would also like to thank my PhD advisors, Miguel \'Angel Barja and Juan Carlos Naranjo, for their friendship, constant support and advice.


\section{Higher rank derivative complexes}
\label{sect:higher}

In this section we present the main definitions and most general results of the paper. We first introduce our main tools, which we call {\em higher rank derivative complex} and {\em Grassmannian BGG complex}, and which are slightly more general than the version we defined in \cite{VGA1}. The reason of these names is that they generalize the derivative and BGG complexes, respectively, to the case where more than one 1-form (or cohomology class $v \in H^1\left(X,\caO_X\right)$) are put into the picture. However, we do not obtain them from a ``derivative'' setting, nor from a categorical analogue to the BGG correspondence. Instead, we construct them directly and show that they coincide with the previous ones in the case of one-dimensional subspaces.

\begin{df}[Higher-rank derivative complex] \label{df-complexes}
Fix integers $r \geq 1$, $1 \leq n \leq \min\left\{r,d\right\}$, $0 \leq j \leq d$, and a linear subspace $W \subseteq V = H^0\left(X,\Omega_X^1\right)$. We define $C^j_{r,n,W}$ as the complex (of vector spaces)
\begin{multline} \label{Complex-W}
0 \longra \Sym^r W \otimes H^j\left(X,\caO_X\right) \longra \Sym^{r-1} W \otimes H^j\left(X,\Omega_X^1\right)\longra \cdots \\
\cdots \longra \Sym^{r-i} W \otimes H^j\left(X,\Omega_X^i\right) \longra \cdots \\
\cdots \longra \Sym^{r-n} W \otimes H^j\left(X,\Omega_X^n\right)
\end{multline}
where the maps $\mu_i^j: \Sym^{r-i}W \otimes H^j\left(X,\Omega_X^i\right) \ra \Sym^{r-i-1}W \otimes H^j\left(X,\Omega_X^{i+1}\right)$ are given by
\beqn
\mu_i^j\left(\left(w_1 \cdots w_{r-i}\right)\otimes[\alpha]\right) = \sum_{t=1}^{r-i} \left(w_1 \cdots \widehat{w_t} \cdots w_{r-i}\right)\otimes\left[w_t \wedge \alpha\right].
\enqn
\end{df}

It is immedate to check that the maps $\mu_i^j$ are well defined and indeed make $C^j_{r,n,W}$ into a complex. Since for every $1 \leq n' < n$ the complex $C^j_{r,n',W}$ is a truncation of $C^j_{r,n,W}$, we may assume that $n$ is always the greatest possible, that is, $n = \min\{r,d\}$, and denote the complex simply by $C^j_{r,W}$. Note that in the case of a 1-dimensional $W$, generated by $w$, we have $\Sym^r W \equiv \mC\left\langle w^r \right\rangle \cong \mC$, and $C^j_{d,\mC\left\langle w \right\rangle}$ is nothing but the complex
\beqn
0 \longra H^j\left(X,\caO_X\right) \stackrel{\wedge w}{\longra} H^j\left(X,\Omega^1_X\right) \stackrel{\wedge w}{\longra} \ldots \stackrel{\wedge w}{\longra} H^j\left(X,\omega_X\right),
\enqn
which is (complex-conjugate to) the {\em derivative complex} studied by Green and Lazarsfeld in \cite{GL1}.

Our main aim is to study the exactness of $C^j_{r,W}$. More precisely, we look for conditions on $W$ which guarantee that $C^j_{r,W}$ is exact in some (say $m$) of its first steps, (i.e., $C^j_{r,m,W}$ is exact), because this exactness will provide several inequalities between the Hodge numbers $h^{p,j}\left(X\right)$.

At some points, we will need to consider different subspaces $W$. Hence, we ``glue'' all the complexes (\ref{Complex-W}) with fixed $k = \dim W$, obtaining a complex of vector bundles on $\mG = \mG_k = Gr\left(k,V\right)$.

\begin{df}[Grassmannian BGG complex]
For any integers $r \geq 1$ and $0 \leq j \leq d$, the $(r,j)$-th {\em Grassmannian BGG complex} (of rank $k$) of $X$ is the complex of vector bundles on $\mG_k$
\begin{multline*} \label{BGG-complexes}
C^j_r: 0 \longra \Sym^rS\otimes H^j\left(X,\caO_X\right) \longra \Sym^{r-1}S \otimes H^j\left(X,\Omega_X^1\right) \longra \cdots \\
\cdots \longra \Sym^{r-i}S \otimes H^j\left(X,\Omega_X^i\right) \longra \cdots \\
\cdots \longra \Sym^{r-n}S \otimes H^j\left(X,\Omega_X^n\right)
\end{multline*}
where $n = \min\{r,d\}$ and over each point $W \in \mG_k$ it is given by (\ref{Complex-W}). Let $\caF^j_{r,n}$ denote the cokernel of the last map in $C^j_{r,n}$, the $(n,r,j)$-th {\em Grassmannian BGG sheaf} (of rank $k$) of $X$.
\end{df}

\begin{rmk}
If $k=1$, then $\mG = \mP = \mP\left(H^0\left(X,\Omega_X^1\right)\right)$, $S = \caO_{\mP}\left(-1\right)$ and $\Sym^rS= \caO_{\mP}\left(-r\right)$. So taking $k=1$ and $r=d$, the above complex is precisely (the complex-conjugate of) the BGG complex introduced by Lazarsfeld and Popa in \cite{LP}. In this way, the Grassmannian BGG complexes can be seen as generalizations of the BGG complex, with the new feature that they capture also the additive structure of the cohomology algebra of $X$. The sheaves $\caF_{r,n}^j$ generalize the BGG sheaves introduced in \cite{LP}.
\end{rmk}

The interest of studying these complexes is that, whenever they are exact at some point $W \in \mG$, they provide some inequalities involving some of the Hodge numbers $h^{i,j}\left(X\right)$. These inequalities are much stronger when the complex is exact {\em at every point}, so that the Grassmannian BGG sheaves are vector bundles and a deeper study of them is feasible (as we will do in Section \ref{sect:non-vanishing}). For example, the proof of the higher-dimensional Castelnuovo-de Franchis inequality given by Lazarsfeld and Popa in \cite{LP} is based on the fact that the BGG sheaf is an indecomposable vector bundle on $\mP^{q-1}$.

In order to study the exactness of $C^j_{r,W}$ we follow the ideas in Section 3 (A Nakano-type generic vanishing theorem) of \cite{GL1}. Consider the following complex of sheaves on $X$
\beqn
\caC_{r,W}: 0 \ra \Sym^rW \otimes \caO_X \ra \Sym^{r-1}W \otimes \Omega_X^1 \ra \cdots \ra \Sym^{r-i}W \otimes \Omega_X^i \ra \cdots \ra \Sym^{r-n}W \otimes \Omega_X^n
\enqn
where the maps $\mu_i: \Sym^{r-i}W \otimes \Omega_X^i \ra \Sym^{r-i-1}W \otimes \Omega_X^{i+1}$ are defined as in Definition \ref{df-complexes}. Clearly, its global sections form the complex $C^0_{r,W}$, and in general, $C^j_{r,W} = H^j\left(X,\caC_{r,W}\right)$ is the complex obtained by applying the $j$-th sheaf cohomology functor. Denote by $K^i = \Sym^{r-i}W \otimes \Omega_X^i$ the $i$-th term of $\caC_{r,W}$, and by $\caH^i=\caH^i\left(\caC_{r,W}\right)$ its $i$-th cohomology sheaf. Then there are two spectral sequences abutting to the hypercohomology of $\caC_{r,W}$, starting at
\beq \label{spectral_seq}
{'E}_1^{i,j}=H^j\left(X,K^i\right) = \Sym^{r-i}W \otimes H^j\left(X,\Omega_X^i\right) \qquad \mbox{and} \qquad {''E}_2^{i,j}=H^i\left(X,\caH^j\right).
\enq

The combined study of these spectral sequences leads to the wanted exactness of $C^j_{r,W}$ at some steps. We start with a generalization of Proposition 3.7 in \cite{GL1}, whose proof is analogous but more involved.

\begin{pro} \label{prop-degen}
For any $W \in \mG_k$, the spectral sequence ${'E}$ degenerates at ${'E}_2$, i.e. ${'E}_2 =\,{'E}_{\infty}$.
\end{pro}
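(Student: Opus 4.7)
The plan is to follow the strategy of Green--Lazarsfeld in \cite[Proposition 3.7]{GL1}, showing that every higher differential $d_r: {'E}_r^{i,j} \to {'E}_r^{i+r, j-r+1}$ with $r \geq 2$ vanishes. I would realize the hypercohomology of $\caC_{r,W}$ via the double complex $K^{i,j} = \Sym^{r-i}W \otimes A^{i,j}(X)$, where $A^{p,q}(X)$ is the space of smooth $(p,q)$-forms on $X$, with horizontal differential $\mu_i$ and vertical differential $\bar\partial$. The spectral sequence ${'E}$ is the one obtained by filtering by columns (i.e., taking $\bar\partial$-cohomology first), so ${'E}_1^{i,j} = \Sym^{r-i}W \otimes H^j(X, \Omega_X^i)$ by the Dolbeault isomorphism, as in (\ref{spectral_seq}).

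To show $d_r$ vanishes I would represent classes by harmonic forms. Given $[\xi] \in {'E}_2^{i,j}$, pick a representative $\xi \in \Sym^{r-i}W \otimes A^{i,j}(X)$ whose components are harmonic, so that $\xi$ is $d$-closed. Since every $w \in W$ is holomorphic and $X$ is compact K\"ahler, each such $w$ is $d$-closed, and hence $\mu_i(\xi)$ is $d$-closed (in particular $\partial$-closed). The vanishing of $d_2[\xi]$ on ${'E}_1$ amounts to $\mu_i(\xi)$ being $\bar\partial$-exact; together with the $\partial$-closedness just established and the $\partial\bar\partial$-lemma on the compact K\"ahler manifold $X$ (applied component-wise in $\Sym^{r-i-1}W$), one has $\mu_i(\xi) = \partial\bar\partial \beta_1$ for some smooth $\beta_1 \in \Sym^{r-i-1}W \otimes A^{i, j-1}(X)$. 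Setting $\rho_1 := -\partial \beta_1$, one gets a $\partial$-closed primitive with $\bar\partial \rho_1 = \mu_i(\xi)$. Inductively, supposing that $d_s[\xi] = 0$ for all $2 \leq s < r$, one constructs $\partial$-closed forms $\rho_s \in \Sym^{r-i-s}W \otimes A^{i+s, j-s}(X)$ with $\bar\partial \rho_s = \mu_{i+s-1}(\rho_{s-1})$, using the fact that $\mu_{i+s}(\rho_s)$ is automatically $\partial$-closed (because $\rho_s$ and the $w$'s are) and $\bar\partial$-closed (by $\mu \circ \mu = 0$ and the induction hypothesis). Then $d_r[\xi]$ is represented by $\mu_{i+r-1}(\rho_{r-1})$, which lies in the image of the $E_1$-differential $\mu_{i+r-1}$, hence vanishes on ${'E}_2^{i+r, j-r+1}$ and a fortiori on ${'E}_r^{i+r, j-r+1}$.

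The main obstacle is the combinatorial bookkeeping of the symmetric-power factors. Whereas in the rank-one case of \cite{GL1} one has $\Sym^{r-i}W \cong \mC$ and the Koszul differential collapses to $\wedge w$, in our setting $\mu_i$ is a sum over the $r-i$ ways of extracting a factor $w_t$ from a symmetric monomial. Hence after each application of the $\partial\bar\partial$-lemma one must verify that the primitive $\rho_s$ is chosen compatibly across all basis monomials of $\Sym^{r-i-s}W$; I would handle this by fixing a basis of $W$ and applying the lemma coefficient-by-coefficient, or alternatively by using the canonical primitive $\bar\partial^{*} G\,\mu_{i+s-1}(\rho_{s-1})$ provided by the Green's operator of the $\bar\partial$-Laplacian, which intertwines with the Koszul structure for free. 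Either route reduces the proof to the scalar $\partial\bar\partial$-lemma plus the identity $\mu \circ \mu = 0$, and the induction on $r$ then proceeds uniformly.
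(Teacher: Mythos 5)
Your proposal follows essentially the same route as the paper's proof: harmonic representatives, iterated use of the $\partial\bar\partial$-lemma to build the zig-zag $\rho_1,\ldots,\rho_{r-1}$ with $\bar\partial\rho_s=\mu(\rho_{s-1})$, and the observation that each $\mu(\rho_s)$ is $\partial$-exact and $\bar\partial$-closed (the latter via $\mu\circ\mu=0$); the paper simply carries out the coefficient-by-coefficient bookkeeping in a fixed basis of $W$, exactly as you suggest. One small correction to your final step: $\rho_{r-1}$ is not $\bar\partial$-closed, so $\mu_{i+r-1}(\rho_{r-1})$ does not obviously lie in the image of the $E_1$-differential; the correct conclusion is that $\mu_{i+r-1}(\rho_{r-1})$, being $\partial$-exact, $\partial$-closed and $\bar\partial$-closed, is $\bar\partial$-exact by one more application of the $\partial\bar\partial$-lemma, hence already represents zero in ${'E}_1^{i+r,j-r+1}$ and a fortiori in ${'E}_r^{i+r,j-r+1}$ --- which is precisely how the paper closes the argument.
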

\begin{proof}
We will denote by $A^{i,j}\left(X\right)$ the vector space of $\caC^{\infty}$ differential forms of type $\left(i,j\right)$, and will identify each cohomology class $\left[b\right] \in H^j\left(X,\Omega_X^i\right)$ with its only harmonic representative $b \in A^{i,j}\left(X\right)$. We will also use the $\partial\bar{\partial}$-Lemma (\cite{Voi1} Proposition 6.17): if $b \in A^{i,j}\left(X\right)$ is both $\partial$- and $\bar{\partial}$-closed, and either $\partial$- or $\bar{\partial}$-exact, then $b = \partial\bar{\partial}c = -\bar{\partial}\partial c$ for some $c \in A^{i-1,j-1}\left(X\right)$.

Fix $\{w_1,\ldots,w_k\}$ a basis of $W$, so that any element $b \in \Sym^{r-i}W \otimes H^j\left(X,\Omega_X^i\right)$ may be uniquely written as
$$b = \sum_{|J|=r-i}w_J \otimes \left[b_J\right],$$
where $J = \{1 \leq j_1 \leq j_2 \leq \cdots \leq j_{r-i} \leq k\}$, $w_J = w_{j_1} \cdots w_{j_{r-i}} \in \Sym^{r-i}W$ and $b_J \in A^{i,j}(X)$ is harmonic.

Firstly, we will show that the differential $d_2$ of ${'E}_2$ vanishes on every ${'E}_2^{i,j}$. By definition, any class in ${'E}_2^{i,j}$ is represented by some
$$b = \sum_{\left|J\right|=r-i}w_J \otimes \left[b_J\right] \in \ker\left\{\mu_i^j=H^j\left(\mu_i\right):\Sym^{r-i}W \otimes H^j\left(X,\Omega_X^i\right)\ra\Sym^{r-i-1}W \otimes H^j\left(X,\Omega_X^{i+1}\right)\right\},$$
that is, such that
$$\sum_{|J|=r-i}\sum_{s=1}^{r-i}w_{J-\left\{j_s\right\}}\otimes\left[w_{j_s} \wedge b_J\right] = \sum_{\left|J'\right|=r-i-1}w_{J'} \otimes \left[\sum_{j=1}^k w_j \wedge b_{J' \cup \left\{j\right\}}\right] = 0$$
where $J-\left\{j_s\right\}$ and $J' \cup \left\{j\right\}$ should be understood as operations on {\em multisets}. This last sum is zero if and only if all the classes $\left[\sum_{j=1}^k w_j \wedge b_{J' \cup \left\{j\right\}}\right]$ vanish in $H^j\left(X,\Omega_X^{i+1}\right) \cong H_{\bar{\partial}}^{i+1,j}\left(X\right)$ (considered as Dolbeault's cohomology classes), so we can assume that all the $\sum_{j=1}^k w_j \wedge b_{J' \cup \left\{j\right\}}$ are $\bar{\partial}$-exact. Since they are also both $\partial$- and $\bar{\partial}$-closed (because so are the $w_j$ and the $b_J$), there exist $c_{1,J'} \in A^{i,j-1}\left(X\right)$ such that
\beq \label{eq_1}
\sum_{j=1}^k w_j \wedge b_{J' \cup \left\{j\right\}} = \bar{\partial}\partial c_{1,J'},
\enq
and $d_2\left(b\right)$ is represented by
\begin{multline*}
\mu_{i+1}^{j-1}\left(\sum_{\left|J'\right|=r-i-1}w_{J'} \otimes \partial c_{1,J'}\right) = \sum_{\left|J'\right|=r-i-1}\sum_{s=1}^{r-i-1} w_{J'-\left\{j'_s\right\}} \otimes \left(w_{j'_s} \wedge \partial c_{1,J'}\right) = \\
= \sum_{\left|J''\right|=r-i-2} w_{J''} \otimes \left(\sum_{j=1}^k w_j \wedge \partial c_{1,J''\cup\left\{j\right\}}\right).
\end{multline*}
So we need to check that all the $a_{J''} = \sum_{j=1}^k w_j \wedge \partial c_{1,J''\cup\left\{j\right\}}$ are $\bar{\partial}$-exact (thus representing the zero class in $H_{\bar{\partial}}^{i+2,j-1}\left(X\right) \cong H^{j-1}\left(X,\Omega_X^{i+2}\right)$). On the one hand, note that $a_{J''} = -\partial\left(\sum_{j=1}^k w_j \wedge c_{1,J''\cup\left\{j\right\}}\right)$, so they are $\partial$-exact, and hence $\partial$-closed. On the other hand, using equation (\ref{eq_1}) we obtain
$$\bar{\partial}a_{J''} = -\sum_{j=1}^k w_j \wedge \bar{\partial}\partial c_{1,J''\cup\left\{j\right\}} = -\sum_{1 \leq j < l \leq k} \left(w_j \wedge w_l + w_l \wedge w_j\right) \wedge b_{J''\cup\left\{j,l\right\}} = 0,$$
so $a_{J''} = \bar{\partial}\partial c_{2,J''}$ for some $c_{2,J''} \in A^{i+1,j-2}\left(X\right)$. In particular, it is $\bar{\partial}$-exact and hence $d_2\left(b\right)=0$, as wanted.

To finish, we have to show that all the subsequent differentials $d_m$ also vanish. Assume inductively that for $2 \leq l < m$ we have $d_l=0$, and that for any $b$ as above we can find forms $c_{l,J_l} \in A^{i+l-1,j-l}\left(X\right)$ such that $\bar{\partial}\partial c_{l,J_l} = \sum_{j=1}^k w_j \wedge \partial c_{l-1,J_l\cup\left\{j\right\}}$ for every multisubset $J_l$ of $\left\{1,\ldots,r\right\}$ of cardinality $r-i-l$. Then, as before, $d_m\left(b\right)$ is the class in ${'E}_m^{i+m,j-m+1}={'E}_2^{i+m,j-m+1}$ of
$$\sum_{\left|J_m\right|=r-i-m} w_{J_m} \otimes \left(\sum_{j=1}^k w_j \wedge \partial c_{m-1,J_m\cup\left\{j\right\}}\right).$$
As above, the forms $\sum_{j=1}^k w_j \wedge \partial c_{m-1,J_m\cup\left\{j\right\}}$ are $\partial$-exact and $\bar{\partial}$-closed, so there exist forms $c_{m,J_m}$ as in the induction hypothesis, and in particular $d_r\left(b\right)=0$ because they are $\bar{\partial}$-exact.
\end{proof}

Suppose now that there is some integer $N$ such that $\caH^j=0$ for all $j<N$, or more generally $H^i\left(X,\caH^j\right)=0$ for $i+j < N$. Then we obtain ${''E}_2^{i,j}=0$ for all $i+j<N$, and hence, by (\ref{spectral_seq}) we get $\mH^m\left(X,\caC_{r,W}\right)=0$ for $m<N$. Looking at the other spectral sequence, it must hold ${'E}_{\infty}^{i,j}={'E}_2^{i,j}=0$ for all $i+j<N$. But ${'E}_2^{i,j}$ is precisely the cohomology of $C^j_{r,W}$ at the $i$-th step, so we get that $C^j_{r,W}$ is exact in the first $N-j$ steps. In particular, $C^0_{r,W}$ is exact at $W \in \mG$ in the first $N$ steps.

Therefore, we will next try to answer the next

\begin{que} \label{que-Eagon}
Fixed $N$, under which hypothesis on $W$ can we assure $H^i(X,\caH^j)=0$ for $i+j<N$?
\end{que}

For this purpose, we will first try to identify the sheaves $\caH^j$. Consider the dual to the evaluation map,
$$g: T_X = \left(\Omega^1_X\right)^{\vee} \longra W^{\vee} \otimes \caO_X,$$
and denote by $\caK = \coker\left(g\right)$. For any $i=1,\ldots,k=\dim W$, let
$$Z_i = Z_i\left(W\right) = \left\{p \in X \,\vert\, \rk \left(g_p:T_{X,p} \ra W^{\vee} \right)< i\right\} = \left\{p \in X \,\vert\, \rk\left(ev_p: W \ra T_{X,p}^{\vee}\right)<i\right\}$$
be the locus where the forms in $W$ span a subspace of dimension $<i$ of the cotangent space, or where the kernel of the evaluation map has dimension greater than $k-i$. Clearly, $\caK$ is supported on $Z_k$, the locus where $g$ is not surjective.

\begin{df}[Non-degenerate subspace] \label{df-non-deg}
We say that $W \subseteq H^0\left(X,\Omega_X^1\right)$ is {\em non-degenerate} if
$$\codim Z_i \geq d-i+1 \qquad \forall \, 1 \leq i \leq \min\left\{k,d\right\}.$$
\end{df}

\begin{rmk}
In the case $k \leq d$, there is a slightly weaker condition which is enough for our purposes, but has the inconvenient that depends on the $r$ we are considering. We will not use it in the sequel, but we include it for the sake of completeness: $W$ is non-degenerate {\em on degree $r$} if
\begin{itemize}
\item $\codim Z_k = d-k+1$ in the case $r \leq d-k+1$, or
\item $\codim Z_j \geq d-j+1$ for $j=\max\{1,d-r+1\},\ldots,k$ in the case $r \geq d-k+2$.
\end{itemize}
\end{rmk}

Definition \ref{df-non-deg} is motivated by some results on complexes of Eagon-Northcott type which allow to identify the cohomology sheaves $\caH^i$ of $\caC_{r,W}$ for non-degenerate $W$ (see \cite{Av}, \cite{BV} and \cite{Eis}, Appendix A2.6 for more details on these kind of complexes).

\begin{lem} \label{lem_ext}
Fix any $r \geq 1$, and assume that $W$ is non-degenerate. Then $\caH^i(\caC_{r,W})=\caExt^i_{\caO_X}(\Sym^r \caK,\caO_X)$ for all $0 \leq i < r$.
\end{lem}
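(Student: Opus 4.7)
The plan is to identify the $\caO_X$-dual of $\caC_{r,W}$ with the first $r$ steps of a locally free resolution of $\Sym^r\caK$; once this is established, the isomorphism drops out of the standard computation of $\caExt$ via a projective resolution. Concretely, applying $\caHom(-,\caO_X)$ term-by-term to $\caC_{r,W}$ and re-indexing homologically yields a complex $\caD_\bullet$ of locally free sheaves with
$$\caD_i = \Sym^{r-i}W^\vee \otimes \wedge^i T_X \qquad (0 \leq i \leq n),$$
whose differential $\caD_i \to \caD_{i-1}$ is the Koszul-type contraction induced by the dualized evaluation $g \colon T_X \to W^\vee \otimes \caO_X$. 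The surjection $W^\vee \otimes \caO_X \twoheadrightarrow \caK$ induces, after taking $\Sym^r$, a surjective augmentation $\epsilon \colon \caD_0 = \Sym^r W^\vee \otimes \caO_X \twoheadrightarrow \Sym^r\caK$ which annihilates the image of $\caD_1 \to \caD_0$ by a direct local computation, so $\caD_\bullet \to \Sym^r\caK \to 0$ is an augmented complex.

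Next, I recognize this augmented complex as the symmetric (Buchsbaum-Rim / Eagon-Northcott) complex attached to the map $g$ in degree $r$, whose acyclicity is governed by the Buchsbaum-Eisenbud criterion as discussed in \cite{Av,BV,Eis}. Exactness at the $i$-th step is ensured once appropriate depth inequalities hold for the Fitting ideals of $g$, and these ideals cut out exactly the degeneracy loci $Z_j = Z_j(W)$ appearing in Definition \ref{df-non-deg}. The codimension bounds $\codim Z_j \geq d - j + 1$ of non-degeneracy translate precisely to the depth conditions needed to guarantee exactness of the augmented complex in positions $0, 1, \ldots, r-1$.

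Finally, since each $\caD_i$ is locally free, applying $\caHom(-,\caO_X)$ once more recovers $\caC_{r,W}$ on the nose, so the partial resolution $\caD_\bullet$ computes $\caExt^\bullet(\Sym^r\caK,\caO_X)$ through its first $r$ positions, yielding $\caH^i(\caC_{r,W}) \cong \caExt^i(\Sym^r\caK,\caO_X)$ for $0 \leq i < r$. The main obstacle is the translation carried out in the second step: one must carefully match the Fitting-ideal depth conditions in the Buchsbaum-Eisenbud criterion with the geometric codimension bounds on the $Z_j$, and verify that the exactness range comes out to exactly $i < r$. The cutoff is natural, since past position $r$ the complex $\caD_\bullet$ runs out of symmetric-power terms on the $W^\vee$-side and so cannot continue as a resolution. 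Dualization, the identification of the differentials, and the interpretation via projective resolutions are all formal.
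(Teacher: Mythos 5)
Your proposal is correct and follows essentially the same route as the paper: the $\caO_X$-dual of $\caC_{r,W}$ is identified with the tail (the terms $\Sym^{r-i}W^\vee\otimes\wedge^iT_X$) of the $r$-th Eagon--Northcott/symmetric complex of $g$, the non-degeneracy condition on the loci $Z_j$ is exactly the depth/codimension hypothesis under which that complex resolves $\Sym^r\caK$ (the paper cites \cite{Laz1}, Theorem B.2.2 for $k\leq d$ and \cite{Av} for $k\geq d$ for precisely this translation), and dualizing back computes $\caExt^i(\Sym^r\caK,\caO_X)$ for $i<r$. The only cosmetic difference is that the paper starts from the resolution and dualizes to recover $\caC_{r,W}$ (noting the maps agree up to factorials), whereas you dualize $\caC_{r,W}$ first; the content is the same.
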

\begin{proof}
The last $r$ steps of the $r$-th Eagon-Northcott complex $EN_r$ associated to $g$ look like
$$EN_r: \ldots \longra \left(\Omega_X^r\right)^{\vee} \longra \left(\Omega_X^{r-1}\right)^{\vee} \otimes W^{\vee} \longra \cdots \longra \left(\Omega_X^1\right)^{\vee} \otimes \Sym^{r-1}W^{\vee} \longra \caO_X \otimes \Sym^r W^{\vee}.$$
The non-degeneracy of $W$ implies (\cite{Laz1}, Theorem B.2.2 for the case $k \leq d$, and \cite{Av}, Proposition 3.(3) for the case $k \geq d$) that $EN_r$ is the end of a locally free resolution of $\Sym^r \caK$, so we can compute
$$\caExt^i_{\caO_X}\left(\Sym^r \caK, \caO_X\right) = \caH^i\left(\caHom_{\caO_X}\left(EN_r,\caO_X\right)\right).$$
But clearly the first $r$ steps of $\caHom_{\caO_X}\left(EN_r,\caO_X\right)$ form the complex $\caC_{r,W}$ (with maps divided by some factorials), and the claim follows.
\end{proof}

We now focus on the case $k \leq d$, where some well-known properties of the $\caExt$ sheaves lead to a first result:

\begin{thm} \label{thm-exact-k<d}
If $W$ is non-degenerate, then the complex
\beqn 
C_{r,W}^j: 0 \ra \Sym^r W \otimes H^j\left(X,\caO_X\right) \ra \cdots \ra \Sym^{r-i} W \otimes H^j\left(X,\Omega_X^i\right) \ra \cdots \ra \Sym^{r-n} W \otimes H^j\left(X,\Omega_X^n\right)
\enqn
is exact at least in the first $d-k-j+1$ steps.
\end{thm}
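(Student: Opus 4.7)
The plan is to combine the double spectral sequence analysis outlined just before the theorem with Lemma~\ref{lem_ext} and a standard vanishing result for $\caExt$ sheaves on a smooth variety. Setting $N:=d-k+1$, my goal will be to show
\begin{equation*}
H^i\bigl(X,\caH^j(\caC_{r,W})\bigr)=0 \qquad \text{for all } i+j<N,
\end{equation*}
which by the convergence of the ${''E}$ spectral sequence forces $\mH^m(X,\caC_{r,W})=0$ for $m<N$, and then, via the degeneration ${'E}_2={'E}_\infty$ from Proposition~\ref{prop-degen}, yields the vanishing $H^i(C^j_{r,W})={'E}_2^{i,j}=0$ in the claimed range $i+j<N$.

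To obtain the sheaf vanishing, I would first apply Lemma~\ref{lem_ext}, which identifies $\caH^j(\caC_{r,W})$ with $\caExt^j_{\caO_X}(\Sym^r\caK,\caO_X)$ in the range $0\leq j<r$. Non-degeneracy of $W$ gives $\codim_X Z_k\geq d-k+1=N$, and $\Sym^r\caK$ is supported on $Z_k$, so by the standard consequence of local duality on the smooth variety $X$,
\begin{equation*}
\caExt^j_{\caO_X}(\caF,\caO_X)=0 \qquad \text{whenever } j<\codim_X\Supp\caF
\end{equation*}
(valid for every coherent sheaf $\caF$), it follows that $\caH^j(\caC_{r,W})=0$ for $0\leq j<\min(r,N)$. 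This already handles the contribution of all pairs $(i,j)$ with $j<r$ and $i+j<N$.

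The hard part will be the boundary case $j=r<N$, where Lemma~\ref{lem_ext} does not apply and $\caH^r(\caC_{r,W})$, being the cokernel of the last map of $\caC_{r,W}$, need not vanish. My plan is to treat it by extending $\caC_{r,W}$ with the next terms of the dualised Eagon--Northcott complex $\caHom_{\caO_X}(EN_r,\caO_X)$: the vanishing $\caExt^j_{\caO_X}(\Sym^r\caK,\caO_X)=0$ for $j<N$ forces $\caH^r(\caC_{r,W})$ to embed into the next locally free term of the dualised resolution, and iterating this identification along the resolution should control $H^i(X,\caH^r)$ in the required range $i<N-r$. For $j>r$ the complex $\caC_{r,W}$ has no terms, so $\caH^j=0$ automatically. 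Assembling these three cases will give the required vanishing of the ${''E}_2$ page and complete the proof.
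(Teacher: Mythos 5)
Your overall strategy --- the two hypercohomology spectral sequences, Proposition \ref{prop-degen}, Lemma \ref{lem_ext}, and the vanishing $\caExt^i(\caF,\caO_X)=0$ for $i<\codim\Supp\caF$ --- is exactly the paper's. The paper, however, does not single out the column $j=r$ at all: it simply writes $\caH^j(\caC_{r,W})=\caExt^j(\Sym^r\caK,\caO_X)=0$ for every $j\le d-k$, which oversteps the range $j<r$ of Lemma \ref{lem_ext} precisely when $r\le d-k$. You were right to be suspicious of that column, and note that when $r\ge d-k+1$ your ``hard case'' is vacuous and your argument coincides with, and correctly reproduces, the paper's proof.

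The genuine gap is in your plan for the hard case: the vanishing $H^i\bigl(X,\caH^r(\caC_{r,W})\bigr)=0$ for $i<N-r$ that you intend to prove is false in general. At a point $p\notin Z_k$ the last map $W\otimes\Omega_X^{r-1}\to\Omega_X^r$ has cokernel isomorphic to $\Lambda^r\bigl(\Omega^1_{X,p}/W_p\bigr)$, of dimension $\binom{d-k}{r}$; so exactly in the range $r\le d-k$ where this column matters, $\caH^r(\caC_{r,W})$ has positive generic rank. Concretely, for $r=1$ and $k<\min\{d,q\}$ the two spectral sequences give $H^0(X,\caH^1)\cong\mH^1(X,\caC_{1,W})$, whose associated graded contains ${'E}_2^{1,0}=H^0(X,\Omega^1_X)/W\neq0$, whereas your claim would force $H^0(X,\caH^1)=0$ as soon as $d-k\ge 2$. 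The mechanism you propose cannot repair this: embedding $\caH^r$ into the next term of the dualised Eagon--Northcott complex only yields exact sequences against locally free sheaves built from $\Omega_X^{r+1},\Omega_X^{r+2},\dots$, whose cohomology does not vanish, so no vanishing of $H^i(X,\caH^r)$ can be extracted by ``iterating along the resolution''. What actually makes the argument work in the range where it is safe is positional rather than cohomological: the column $j'=r$ of ${''E}_2$ only contributes to $\mH^m(X,\caC_{r,W})$ for $m\ge r$, so the vanishing of the columns $j'<r$ already gives $\mH^m(X,\caC_{r,W})=0$, and hence ${'E}_2^{i,j}=0$, for all $i+j<\min\{r,\,d-k+1\}$; this in particular covers every application with $j=0$. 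You should either restrict your conclusion to that range or produce a genuinely new argument for total degrees between $r$ and $d-k$ --- the paper's own proof does not supply one, since it silently identifies the cokernel of the last map of $\caC_{r,W}$ with $\caExt^r(\Sym^r\caK,\caO_X)$, an identification that fails whenever the dualised Eagon--Northcott complex continues nontrivially past $\Omega_X^r$.
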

\begin{proof}
For any coherent sheaf $\caF$ on $X$ we have (see \cite{HL}, Proposition 1.6.6)
$$\caExt^i(\caF,\caO_X) = 0 \qquad \forall \, i < \codim \Supp \caF.$$
Since $\Supp \Sym^r \caK = \Supp \caK = Z_k$ has codimension at least $d-k+1$ because $W$ is non-degenerate, we obtain
$$\caH^j(\caC_{r,W}) = \caExt^j(\Sym^r \caK,\caO_X) = 0$$
for all $j \leq d-k$. Therefore the second spectral sequence in (\ref{spectral_seq}) satisfies ${''E}_2^{i,j}=0$ for all $i$ and all $j \leq d-k$. Since ${''E}_2^{i,j}$ abuts to the hypercohomology of $\caC_{r,W}$, this implies that $\mH^i(X,\caC_{r,W}) = 0$ for all $i \leq d-k$. Recalling that the first spectral sequence ${'E}_1^{i,j}$ degenerates at ${'E}_2$ (Proposition \ref{prop-degen}), and it also abuts to the hypercohomology of $\caC_{r,W}$, this implies that ${'E}_2^{i,j}=0$ for all $i+j \leq d-k$. But ${'E}_2^{i,j}$ is precisely the cohomology of the complex $C^j_{r,W}$ at the $i$-th step, so the claim follows.
\end{proof}

Some known results suggest that $C_{r,W}^j$ should be exact under weaker hypothesis, and even for some $k>d$. To get such a result we should study the cohomology of the sheaves $\caH^i=\caExt_{\caO_X}^i\left(\Sym^r\caK,\caO_X\right)$, which may vanish even if the sheaves do not. For instance, in general, the approach with spectral sequences shows that the kernel of the first map of $C_{r,W}^0$, $\mu_0^0: \Sym^r W \ra \Sym^{r-1} W \otimes H^0\left(X,\Omega_X^1\right)$ is $H^0\left(X,\caHom\left(\Sym^r\caK,\caO_X\right)\right)$, which must always vanish because $\mu_0^0$ is always injective. Furthermore, according to \cite{VGA1}, ${'E}_2^{1,0}$ vanishes for general $W$ of even dimension $k$ if $X$ is not fibred over an Albanese general type variety of dimension at most $\frac{k}{2}$ (more generally, if $X$ has no generalized Lagrangian form of rank $\frac{k}{2}$).

Moreover, as the following example shows, the spectral sequence ${''E}_2$ is not degenerate in general. Therefore, even if the cohomologies ${''E}_2^{i,j}$ of $\caH^i$ do not vanish, the limit groups ${''E}_{\infty}^{i,j}$ may anyway vanish, so Theorem \ref{thm-exact-k<d} is not sharp.

\begin{ex}
Consider $C_1,C_2 \subset \mP^2$ two smooth curves of degree 4 (genus 3) intersecting transversely in 16 points $p_1,\ldots,p_{16}$, and let $X = C_1 \times C_2$. Fix a basis $\eta_1,\eta_2,\eta_3 \in H^0\left(\mP^2,\caO_{\mP^2}\left(1\right)\right)$, and denote by $\alpha_i$ and $\beta_i$ its restrictions to $C_1$ and $C_2$ respectively, which can be thought as differential forms since $\omega_{C_i} \cong \caO_{C_i}(1)$ by adjunction. Finally, set $w_i = p_1^*\alpha_i + p_2^*\beta_i \in H^0\left(X,\Omega_X^1\right)$, let $W \subset H^0\left(X,\Omega_X^1\right)$ be the vector space spanned by the $w_i$, and consider the case $r=2$:
\beq \label{ex1}
\caC_{2,W}: \, 0 \longra \Sym^2 W \otimes \caO_X \longra W \otimes \Omega_X^1 \longra \omega_X \longra 0.
\enq

The situation is explicit enough to compute most of the objects considered above. An immediate computation shows that $Z_1 = \emptyset$ and $Z_2=\left\{P_1,\ldots,P_{16}\right\}$, where $P_i=\left(p_i,p_i\right)$, so $W$ is non-degenerate. Moreover, a complete description of the first spectral sequence ${'E}_1$ can be carried out to find that ${'E}_2^{i,j}=0$ for all $i,j$ except for ${'E}_2^{0,2} \cong \mC^{37}$, ${'E}_2^{1,1} \cong \mC^{18}$ and  ${'E}_2^{1,0} \cong \mC^3$. This implies that $\mH^1\left(X,\caC_{2,W}\right)\cong\mC^3$, $\mH^2\left(X,\caC_{2,W}\right)\cong\mC^{55}$, and all the other hypercohomology groups vanish.

As for the second spectral sequence, we start computing the cohomology sheaves $\caH^i$ of (\ref{ex1}). The last map is surjective, hence $\caH^2 = 0$. The sheaf $\caH^1$ is supported on $Z_2$, and the transversality of $C_1$ and $C_2$ implies that each stalk $\caH^1_{P_i}$ is a three-dimensional vector space, so that $H^0\left(X,\caH^1\right) \cong \mC^{48}$ and the rest of its cohomology groups are zero. This computation is enough to show that ${''E}_2$ is not degenerate, since if it was, the group $H^0\left(X,\caH^1\right)\cong\mC^{48}$ would be a summand of $\mH^1\left(X,\caC_{2,W}\right)\cong\mC^3$.
\end{ex}

\begin{rmk}
In the case $\dim W=1$, the exactness of $C^0_{d,W}$ is directly related to the {\em cohomological support loci}
$$V^i\left(X,\omega_X\right) = \left\{\alpha \in \Pic^0\left(X\right)\,|\,h^i\left(X,\omega_X\otimes\alpha\right)\neq0\right\} \subseteq \Pic^0\left(X\right)$$
introduced by Green-Lazarsfeld in \cite{GL1}. More precisely, if $W=\mC\left\langle w\right\rangle \subseteq H^0\left(X,\Omega_X^1\right)$, then $C^0_{d,W}$ is complex-conjugate to the {\em derivative complex}
\beqn
0 \longra H^0\left(X,\caO_X\right) \stackrel{\wedge \overline{w}}{\longra} H^1\left(X,\caO_X\right) \stackrel{\wedge \overline{w}}{\longra} \cdots \stackrel{\wedge \overline{w}}{\longra} H^d\left(X,\caO_X\right),
\enqn
which is exact at $H^i\left(X,\caO_X\right)$ if and only if the ``line'' in $\Pic^0\left(X\right)$ spanned by $\overline{w}\in H^1\left(X,\caO_X\right) \cong T_{\caO_X}\Pic^0\left(X\right)$ is not contained in $V^i\left(X,\omega_X\right)$ (\cite{GL2} Corollary 3.3). Equivalently, since the cohomological support loci are translates of subtori of $\Pic^0\left(X\right)$ (\cite{GL2},Theorem 0.1), we can say that $C^0_{d,W}$ is exact at the $i$-th step if and only if $\overline{W}$ is not tangent to $V^i\left(X,\omega_X\right)$ at $\caO_X$.

This is no longer true for higher-rank derivative complexes. Indeed, in the previous example, the cohomological support loci of $X$ are $V^1\left(X,\omega_X\right) = \pi_1^*\Pic^0\left(C_1\right) \cup \pi_2^*\Pic^0\left(C_2\right)$ and $V^2\left(X,\omega_X\right) = \left\{\caO_X\right\}$, which are clearly transverse to $\overline{W}$ at $\caO_X \in \Pic^0\left(X\right)$ while $C^0_{2,W}$ is exact only at the $0$-th step.
\end{rmk}

We now turn to the numerical consequences of Theorem \ref{thm-exact-k<d}.

\begin{cor} \label{cor-k<d-1}
If $X$ admits a non-degenerate subspace of dimension $k \left(\leq d\right)$, then
\beq \label{eq-cor-exact-1}
\sum_{i=0}^p (-1)^{p-i}\binom{r-i+k-1}{k-1}h^{i,j}(X) \geq 0
\enq
for every $p \leq \min\left\{d-k-j+1,r\right\}$. In particular
\beqn 
h^{p,j}(X) \geq \sum_{i=0}^{p-1} (-1)^{p-i-1}\binom{p-i+k-1}{k-1}h^{i,j}(X)
\enqn
for every $p+j \leq d-k+1$.
\end{cor}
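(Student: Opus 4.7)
The plan is to apply Theorem \ref{thm-exact-k<d} to reduce the corollary to a purely linear-algebraic fact about complexes that are exact in their first few steps, then to compute the dimensions of the terms of $C^j_{r,W}$ explicitly.

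First, I would fix a non-degenerate subspace $W$ of dimension $k$ (whose existence is given by hypothesis). By Theorem \ref{thm-exact-k<d}, the complex $C^j_{r,W}$ is exact in its first $d-k-j+1$ steps; in other words, the cohomology groups $H^i(C^j_{r,W})$ vanish for all $i < d-k-j+1$. Next, I would record the general linear-algebra observation: if a complex of finite-dimensional vector spaces
\[
0 \longra V_0 \stackrel{\phi_0}{\longra} V_1 \stackrel{\phi_1}{\longra} \cdots \longra V_p \longra \cdots
\]
is exact at positions $0, 1, \ldots, p-1$, then the dimension of $\im \phi_{p-1} \subseteq V_p$ equals $\sum_{i=0}^{p-1}(-1)^{p-1-i}\dim V_i$, and since this image has dimension at most $\dim V_p$, we obtain
\[
\sum_{i=0}^{p}(-1)^{p-i}\dim V_i \,\geq\, 0.
\]

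Applying this to $C^j_{r,W}$ with $V_i = \Sym^{r-i}W \otimes H^j(X,\Omega_X^i)$, whose dimension is $\binom{r-i+k-1}{k-1}h^{i,j}(X)$, gives inequality (\ref{eq-cor-exact-1}) for any $p$ such that exactness holds at positions $0,\ldots,p-1$, i.e. $p-1 < d-k-j+1$, and such that $V_p$ actually appears in the complex, i.e. $p \leq \min\{r,d\}$. Since the first condition $p \leq d-k-j+1$ already implies $p \leq d$ (as $k \geq 1$ and $j \geq 0$), the range $p \leq \min\{d-k-j+1, r\}$ is precisely the one stated.

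For the ``in particular'' statement, I would specialize to $r=p$ so that the $i=p$ term contributes $\binom{k-1}{k-1}h^{p,j}(X)=h^{p,j}(X)$, and then isolate this term on the left-hand side. The condition $p\leq r$ becomes trivially satisfied, and the range condition collapses to $p+j\leq d-k+1$. There is essentially no obstacle here; the entire content sits in Theorem \ref{thm-exact-k<d}, and the remainder is bookkeeping on dimensions of symmetric powers.
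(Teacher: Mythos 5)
Your proof is correct and follows the same route as the paper, which simply cites Theorem \ref{thm-exact-k<d} and the specialization $r=p$; you have merely filled in the standard rank-counting argument and the computation $\dim\Sym^{r-i}W=\binom{r-i+k-1}{k-1}$, both of which are right. The bookkeeping on the range of $p$ (including the observation that $p\leq d-k-j+1$ already forces $p\leq d$) is also accurate.
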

\begin{proof}
The first inequality is a direct consequence of Theorem \ref{thm-exact-k<d}, and the second one is the particularization to the case $r=p$.
\end{proof}

And computing a little bit more we find the next (more explicit) result:

\begin{cor} \label{cor-k<d-2}
If $X$ admits a non-degenerate subspace of dimension $k \leq d$, then
$$h^{p,j}(X) \geq \binom{k}{p}h^{0,j}(X)$$
for every $p \leq k$ and $p \leq d-k-j+1$, and therefore
$$h^{p,j}(X) \geq \binom{k}{p}\binom{k}{j}$$
if $p,j \leq k$ and $p+j \leq d-k+1$.
\end{cor}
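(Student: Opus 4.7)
The plan is to package all the inequalities produced by Corollary~\ref{cor-k<d-1} into a single generating-function identity, after which the desired bounds fall out by matching coefficients. Introduce the Hodge polynomial
$$H(z) \;=\; \sum_{i=0}^{d} h^{i,j}(X)\, z^{i}$$
and set $F(z) := H(z)/(1+z)^{k}$. The starting observation is that the coefficients of $F(z)$ are precisely the left-hand sides of the inequalities in Corollary~\ref{cor-k<d-1}. Indeed, expanding $(1+z)^{-k} = \sum_{s\geq 0} (-1)^{s} \binom{s+k-1}{k-1} z^{s}$ and multiplying by $H(z)$ gives
$$f_{r} \;:=\; [z^{r}]\,F(z) \;=\; \sum_{i=0}^{r} (-1)^{r-i} \binom{r-i+k-1}{k-1} h^{i,j}(X),$$
which is non-negative for every $r\leq d-k-j+1$ by Corollary~\ref{cor-k<d-1} applied with $p=r$.

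Fix now $p\leq d-k-j+1$ (the condition $p\leq k$ is only needed to make $\binom{k}{p}$ non-zero, otherwise the statement is vacuous). Writing $H(z) = (1+z)^{k} F(z)$ and reading off the coefficient of $z^{p}$ yields
$$h^{p,j}(X) \;=\; \sum_{n=0}^{p} \binom{k}{p-n}\, f_{n}.$$
Each summand on the right is non-negative by the previous paragraph, and since $f_{0} = H(0) = h^{0,j}(X)$, retaining only the $n=0$ term gives the first inequality
$$h^{p,j}(X) \;\geq\; \binom{k}{p}\, h^{0,j}(X).$$

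For the second inequality it suffices to bound $h^{0,j}(X)$ from below. By Hodge symmetry $h^{0,j}(X)=h^{j,0}(X)$, and applying the first inequality with $(p,j)$ replaced by $(j,0)$ is legitimate: the hypothesis $p+j\leq d-k+1$ (with $p\geq 0$) gives $j\leq d-k+1$, and $j\leq k$ is assumed. Hence
$$h^{0,j}(X) \;=\; h^{j,0}(X) \;\geq\; \binom{k}{j}\, h^{0,0}(X) \;=\; \binom{k}{j},$$
where $h^{0,0}(X)=1$ because $X$ is connected. Combining with the first inequality produces $h^{p,j}(X)\geq \binom{k}{p}\binom{k}{j}$.

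The only non-routine step is the initial observation: once one recognises the quantities in Corollary~\ref{cor-k<d-1} as the Taylor coefficients of $H(z)/(1+z)^{k}$, the entire proof reduces to the trivial identity $(1+z)^{-k}\cdot(1+z)^{k}=1$, which is the generating-function shadow of the Koszul resolution of $\mathbb{C}$ over $\Sym W$. No induction on $p$ nor bookkeeping of alternating signs is required.
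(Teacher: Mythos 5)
Your proof is correct and is essentially the paper's own argument in generating-function clothing: reading off the coefficient of $z^p$ in $H(z)=(1+z)^kF(z)$ is exactly the paper's inversion $h^{p,j}(X)=\sum_{i=0}^{p}\binom{k}{p-i}M_{i,j}$ with $M_{i,j}=f_i$, and the paper verifies the underlying combinatorial identity by expanding the very same relation $(1+x)^{A}/(1+x)^{A}=1$. The deduction of the second inequality via Hodge symmetry $h^{0,j}=h^{j,0}$ also coincides with the paper's.
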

\begin{proof}
It is a consequence of the identity
\beq \label{combin}
\sum_{n=0}^{\min\{A,B\}} \left(-1\right)^{B-i}\binom{A}{n} \binom{A+B-n-1}{B-n} =
\begin{cases}
1 & \text{ if } B=0 \\
0 & \text{ otherwise}
\end{cases}
\enq
which holds for any non-negative integers $A,B$ and can be easily proved by looking at the coefficient of $x^B$ in the expansion of the right-hand side of
\beqn
1 = \frac{\left(1+x\right)^A}{\left(1+x\right)^A} = \left(\sum_{n=0}^A\binom{A}{n}x^n\right)\left(\sum_{m\geq0}\left(-1\right)^m\binom{A+m-1}{m}x^m\right).
\enqn

Indeed, denote by $M_{p,j}=\sum_{i=0}^p \left(-1\right)^{p-i} \binom{p-i+k-1}{k-1}h^{i,j}(X)$, the right-hand-side of (\ref{eq-cor-exact-1}), and compute
\begin{multline*}
\sum_{i=0}^p \binom{k}{p-i} M_{i,j} = \sum_{i=0}^p \binom{k}{p-i} \sum_{m=0}^i \left(-1\right)^{i-m}\binom{i-m+k-1}{k-1} h^{m,j}(X) = \\
= \sum_{m=0}^p \left(\sum_{i=m}^p\left(-1\right)^{i-m}\binom{k}{p-i}\binom{i-m+k-1}{i-m}\right) h^{m,j}(X) = h^{p,j}(X),
\end{multline*}
where the last equality follows from (\ref{combin}) because
$$\sum_{i=m}^p\left(-1\right)^{i-m}\binom{k}{p-i}\binom{i-m+k-1}{i-m} = \sum_{n=0}^{p-m}\left(-1\right)^{p-m-n}\binom{k}{n}\binom{p-n-m+k-1}{p-n-m}$$
and $p-m \leq p \leq k$. Therefore,
$$0 \leq \sum_{i=1}^p \binom{k}{p-i} M_{i,j} = h^{p,j}(X) - \binom{k}{p}M_{0,j} = h^{p,j}(X) - \binom{k}{p}h^{0,j}(X),$$
as wanted. The second statemet follows at once from the first statement applied to $h^{0,j}(X)=h^{j,0}(X)$.
\end{proof}


\section{Subvarieties of Abelian varieties}
\label{sect:subvAV}

In this section we focus on subvarieties of Abelian varieties, for which generic subspaces $W \subseteq H^0\left(X,\Omega_X^1\right)$ are non-degenerate (see Proposition \ref{pro-non-deg-subv}) and it is possible to apply the results in the previous section.

\begin{pro} \label{pro-non-deg-subv}
Let $X \subseteq A$ be a smooth subvariety of an Abelian variety $A$. Then, for every $k=1,\ldots,q(X)$, the $k$-dimensional non-degenerate subspaces $W \in Gr\left(k,H^0\left(X,\Omega_X^1\right)\right)$ form a non-empty Zariski-open subset.
\end{pro}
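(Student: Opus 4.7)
The plan is to combine the special feature of Abelian varieties (triviality of $\Omega_A^1$) with a Bertini--Kleiman dimension count on the universal degeneracy locus. The starting observation is that, since $\Omega_A^1$ is the trivial bundle of rank $\dim A$ (globally generated by $H^0(A,\Omega_A^1)$), the conormal sequence $0 \to N_{X/A}^\vee \to \Omega_A^1|_X \to \Omega_X^1 \to 0$ shows that the restrictions to $X$ of the $1$-forms on $A$ span a subspace $V_0 \subseteq V$ which already globally generates $\Omega_X^1$. In particular, the evaluation $ev_p : V \to T_{X,p}^{\vee}$ is \emph{surjective} for every $p \in X$, so its kernel $K_p \subseteq V$ has constant dimension $q-d$.

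With that in hand, I would introduce the incidence variety
$$\caZ_i := \left\{(W,p) \in \mG_k \times X \,\vert\, \rk(ev_p|_W) < i \right\}$$
and bound its dimension by projecting to $X$. Using the identity $\rk(ev_p|_W) = k - \dim(W \cap K_p)$, the fiber of $\caZ_i \to X$ over each $p$ is the Schubert variety $\{W \in \mG_k : \dim(W \cap K_p) \geq k-i+1\}$, whose codimension in $\mG_k$ equals $(k-i+1)(d-i+1)$ by the standard formula $j(q-k-f+j)$ applied with $f = \dim K_p = q-d$ and $j = k-i+1$. Therefore
$$\dim \caZ_i \;\leq\; \dim \mG_k + d - (k-i+1)(d-i+1).$$

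Finally, by upper semicontinuity of fiber dimension applied to the projection $\pi : \caZ_i \to \mG_k$, the locus $U_i := \{W \in \mG_k : \dim Z_i(W) \leq d - (k-i+1)(d-i+1)\}$ is Zariski open, and the displayed dimension bound forces it to be non-empty. For $1 \leq i \leq \min\{k,d\}$ one has $k-i+1 \geq 1$, so $(k-i+1)(d-i+1) \geq d-i+1$, and hence $U_i \subseteq \{W : \codim Z_i(W) \geq d-i+1\}$. Intersecting these open non-empty conditions over $i = 1,\ldots,\min\{k,d\}$ yields the desired non-empty Zariski open subset of non-degenerate subspaces. The only potentially subtle point is the Schubert codimension formula, but it is completely formal; the real geometric input is the global generation of $\Omega_X^1$ by forms pulled back from $A$, which is the clean consequence of the Abelian hypothesis.
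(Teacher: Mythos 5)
Your argument is correct, and it rests on the same geometric input as the paper's proof --- namely that the evaluation map $ev_p\colon V\to T_{X,p}^{\vee}$ is surjective at every point of $X$ because $\Omega_X^1$ is a quotient of the trivial bundle $\Omega_A^1|_X$ (the paper phrases this as the injectivity of $T_{X,p}\hookrightarrow T_{A,p}$) --- but the route is genuinely different. The paper proceeds by induction on $k$: it fixes a non-degenerate $W'$ of dimension $k-1$, builds an incidence variety in $X'\times\mP\left(V/W'\right)$ which is a $\mP^{q-d-1}$-bundle over $X'$, and only has to control the top stratum $Z_k(W)$ at each step, the lower $Z_i$ being inherited from $W'$. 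You instead make a single, non-inductive dimension count on the universal degeneracy locus $\caZ_i\subseteq\mG_k\times X$ for each $i$ simultaneously, using that the fibre over $p\in X$ is a Schubert variety of codimension $(k-i+1)(d-i+1)$ in $\mG_k$ (the constancy of $\dim K_p=q-d$ is exactly what makes these fibres equidimensional). Your approach is cleaner and yields slightly more: the generic $W$ satisfies $\dim Z_i(W)\leq d-(k-i+1)(d-i+1)$, which is stronger than the required $\dim Z_i(W)\leq i-1$ whenever $k>i$; the paper's induction instead shows that every non-degenerate $W'$ extends to a non-degenerate $W\supset W'$, which is extra information of a different kind. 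Two small points to tidy up: (i) the openness of $U_i$ needs not just upper semicontinuity of fibre dimension on $\caZ_i$ but also properness of $\caZ_i\to\mG_k$ to push the closed bad locus down to $\mG_k$ --- automatic here since $X$ is compact, but worth saying; and (ii) as written you produce a non-empty open subset \emph{contained in} the non-degenerate locus, whereas the statement asserts that the non-degenerate locus itself is open; this follows by the identical semicontinuity argument applied with the thresholds $\dim Z_i(W)\leq i-1$, so it is a one-line addendum rather than a gap.
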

\begin{proof}
First of all, upon replacing $A$  by the subtorus spanned by $X$, we may assume that $V=H^0\left(X,\Omega_X^1\right)\cong H^0\left(A,\Omega_A^1\right)$. Since the non-degeneracy condition is open, we only need to construct a non-degenerate subspace of any dimension $k$. We will proceed by induction over $k$.

A one-dimensional subspace $W = \mC\left\langle w \right\rangle$ is non-degenerate if and only if $\codim Z_1 \geq d$. Since $Z_1 = Z(w)$ is the set of zeroes of any generator $w$, $W$ is non-degenerate if and only if $w$ vanishes (at most) at isolated points. To prove that generic elements $w \in V$ satisfy that, let us consider the incidence variety
$$I=\left\{\left(x,\left[w\right]\right)\in X \times \mP\left(V\right)\,\vert\, w(x)=0\right\} \subseteq X \times \mP\left(V\right).$$
The first projection makes $I$ into a projective bundle of fibre $\mP^{q-d-1}$ (where as usual, $d=\dim X$ and $q = q(X)$). Indeed, the fibre over any $x \in X$ is (the projectivization of) the set of 1-forms vanishing at $x$. Since the tangent space $T_{X,x}$ injects into $T_{A,x}$, the set of 1-forms vanishing at $x$ is the annihilator $T_{X,x}^{\perp}$ inside $T_{A,x}^{\vee} \cong V$, which has dimension $q-d$. In particular, $I$ is irreducible of dimension $(q-d-1)+d=q-1$.

Consider now the second projection $I \ra \mP\left(V\right)$. It is clear that the fibre over a point $\left[w\right]$ is the zero set $Z\left(w\right)$, so we want to see that a general fibre has dimension at most 0. If $I$ dominates $\mP\left(V\right) \cong \mP^{q-1}$, the general fibre has dimension $(q-1)-(q-1)=0$. If otherwise $I$ does not dominate $\mP\left(V\right)$, the general fibre is empty (that is, a generic 1-form does not vanish at any point). In any case, we are done.

For the inductive step, note first that if we have two nested subspaces $W' \subseteq W \subseteq V$ and $\dim W' = k'$, then $Z_i\left(W\right) \subseteq Z_i\left(W'\right)$ for every $i=1,\ldots,k'$. Therefore, if $W'$ is non-degenerate and $k=\dim W = k'+1$, then $\codim Z_i\left(W\right) \geq \codim Z_i\left(W'\right) \geq d-i+1$ for $i=1,\ldots,k-1$, and $W$ will be non-degenerate as soon as $\codim Z_k\left(W\right) \geq d-k+1$.

Fix a non-degenerate subspace $W'$ of dimension $k-1$ (it exists by the induction hypothesis), so that in particular $\codim Z_{k-1}(W') \geq d-k+2$, and let $X' = X - Z_{k-1}\left(W'\right)$ be the open set where the evaluation $W' \ra T_{X,x}^{\vee}$ is injective. For any $x \in X'$, denote by $W'_x \subseteq T_{X,x}^{\vee}$ the image of the evaluation, and by $E_x \subseteq T_{X,x}$ the subspace of tangent vectors annihilated by $W'_x$, which has dimension $\dim T_{X,x}-\dim W'_x = d-k+1$. Consider the new incidence variety
$$I_k = \left\{\left(x,W\right)\,\vert\,x\in X', W=W'+\mC\langle w\rangle, E_x \subseteq \ker w(x)\right\} \subseteq X' \times \mP\left(V/W'\right).$$
Note that the condition $E_x\subseteq \ker w(x)$ is independent of the choice of the complement $\mC\langle w \rangle$ of $W'$ in $W$, so $I_k$ is well defined. As in the case $k=1$, the first projection makes $I_k$ into a $\mP^{q-d-1}$-bundle, so $I_k$ is irreducible of dimension $q-1$. Indeed, the fibre over a point $x \in X'$ is the projectivization of
$$\left\{w+W' \in V/W'\,\vert\,E_x \subseteq \ker w\right\}=\left\{w \in V \,\vert\,E_x \subseteq \ker w\right\}/W'=E_x^{\perp}/W' \cong \mC^{q-d},$$
where the annihilator $E_x^{\perp}$ is taken in $V$, that is, it is the kernel of the restriction $V \twoheadrightarrow E_x^{\vee}$ dual to the composition of inclusions $E_x \subseteq T_{X,x} \subseteq T_{A,x}=V^{\vee}$.

As for the second projection, the fibre over $W=\mC\langle w \rangle + W' \in \mP\left(V/W'\right)$ is the set
$$\left\{x \in X' \,\vert\, E_x \subseteq \ker w(x)\right\}=\left\{x \in X' \,\vert\, w(x)\in W'_x\right\}=Z_k(W)\cap X'=Z_k(W)-Z_{k-1}(W'),$$
and for $W$ generic its dimension is either zero (if the second projection is not dominant) or $\dim I_k - \dim \mP\left(V/W'\right) = (q-1)-(q-(k-1)-1)=k-1$. Since the dimension of $Z_{k-1}(W')$ is at most $k-2$, we conclude that $\dim Z_k(W) \leq k-1$ for $W$ generic containing $W'$, finishing the proof.
\end{proof}

\begin{rmk}
Note that the only property we have used is that the tangent spaces $T_{X,x}$ inject into the tangent space of the Abelian variety at every point. Therefore, the same result holds true for \'etale coverings of subvarieties of Abelian varieties.
\end{rmk}

Therefore we can apply corollaries \ref{cor-k<d-1} and \ref{cor-k<d-2} for any $k \leq d$ to obtain in particular the next inequality:

\begin{cor}
If $X$ is a subvariety of an Abelian variety and $p,j \geq 0$ satisfy $\max\{p,j\} \leq d+1-(p+j)$, then
$$h^{p,j}(X) \geq \binom{d+1-(p+j)}{p}\binom{d+1-(p+j)}{j}.$$
\end{cor}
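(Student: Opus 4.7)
The plan is a direct application of Corollary \ref{cor-k<d-2} at the optimal value of $k$. By Proposition \ref{pro-non-deg-subv} and the remark following it, $X$ admits non-degenerate subspaces of every dimension $1 \leq k \leq q(X)$. Since the inclusion $X \hookrightarrow A$ induces a surjection $\Alb(X) \twoheadrightarrow A'$ onto the subtorus $A' \subseteq A$ generated by $X$, we have $d = \dim X \leq \dim A' \leq q(X)$, so non-degenerate $k$-dimensional subspaces exist for every $k \leq d$.

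Corollary \ref{cor-k<d-2} then yields $h^{p,j}(X) \geq \binom{k}{p}\binom{k}{j}$ under the constraints $p, j \leq k \leq d$ and $p+j \leq d-k+1$. The product $\binom{k}{p}\binom{k}{j}$ is non-decreasing in $k$ once $k \geq \max\{p,j\}$, so the strongest bound comes from the largest admissible $k$. I would therefore set $k := d+1-(p+j)$, which precisely saturates the inequality $p+j \leq d-k+1$ and plugs directly into the corollary to give the claimed $\binom{d+1-(p+j)}{p}\binom{d+1-(p+j)}{j}$.

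It remains only to verify that this choice of $k$ is legal. The hypothesis $\max\{p,j\} \leq d+1-(p+j)$ is exactly $\max\{p,j\} \leq k$, so the constraints $p \leq k$ and $j \leq k$ come for free; the bound $k \leq d$ holds as soon as $p+j \geq 1$. The degenerate case $p = j = 0$ (in which $k = d+1$ would exceed $d$ and hence fall outside the range allowed by Corollary \ref{cor-k<d-2}) must be dismissed separately, but the asserted inequality there is merely $h^{0,0}(X) = 1 \geq 1$, which is automatic.

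I do not foresee any substantive obstacle: the argument is essentially bookkeeping, once one observes that the bound of Corollary \ref{cor-k<d-2} is monotone in $k$ and that one should saturate the admissibility constraint $p+j \leq d-k+1$. The only mild subtlety is the boundary case $p = j = 0$, where the formally optimal $k$ lies just outside the permissible range.
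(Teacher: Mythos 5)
Your argument is correct and is exactly the route the paper intends: apply Corollary \ref{cor-k<d-2} with $k = d+1-(p+j)$, whose admissibility constraints reduce precisely to the stated hypothesis $\max\{p,j\} \leq d+1-(p+j)$, with non-degenerate subspaces of that dimension supplied by Proposition \ref{pro-non-deg-subv} since $k \leq d \leq q(X)$. Your explicit treatment of the boundary case $p=j=0$ is a harmless extra precaution that the paper leaves implicit.
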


For $X$ a subvariety of an Abelian variety $A$ with $H^0\left(X,\Omega_X^1\right) = H^0\left(A,\Omega_A^1\right)$ it is also useful to consider the extremal case $k = q$, that is, $W = H^0\left(X,\Omega_X^1\right)$ is the whole space of holomorphic 1-forms. In this case, the cokernel $\caK$ of the previous section is simply the normal bundle $N_{X/A}$. Since it is a vector bundle, so is $\Sym^r\caK$, and hence $\caExt^i_{\caO_X}\left(\Sym^r\caK,\caO_X\right) = 0$ for every $i>0$. Therefore, the second spectral sequence ${''E}$ is degenerate at ${''E}_2$, and its only possibly non-zero terms are ${''E}_2^{i,0}=H^i\left(X,\Sym^r N_{X/A}^{\vee}\right)$ (recall the definition (\ref{spectral_seq}) of ${''E}$ and Lemma \ref{lem_ext}). This leads to the following

\begin{pro} \label{prop-q}
Let $X \subseteq A$ be a subvariety of an Abelian variety such that $H^0\left(X,\Omega_X^1\right) = H^0\left(A,\Omega_A^1\right)$. If for some positive integers $r,N$ the normal bundle $N_{X/A}$ satisfies $H^i\left(X,\Sym^r N_{X/A}^{\vee}\right)=0$ for all $i < N$, then the complex
\begin{equation} \label{complex-q}
0 \ra \left(\Sym^r H^0\left(X,\Omega_X^1\right)\right) \otimes H^j\left(X,\caO_X\right) \ra 
\cdots \ra \left(\Sym^{r-N+j} H^0\left(X,\Omega_X^1\right)\right) \otimes H^j\left(X,\Omega_X^{N-j}\right)
\end{equation}
is exact for any $j<N$.
\end{pro}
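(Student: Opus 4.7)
The plan is to apply the two hypercohomology spectral sequences (\ref{spectral_seq}) to $\caC_{r,W}$ with $W = V$, combining the degeneration of the first (Proposition \ref{prop-degen}) with a direct computation of the abutment via the second.

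I begin by identifying the cohomology sheaves $\caH^i = \caH^i(\caC_{r,W})$. Since $X \subseteq A$ is smooth, $T_X$ is a subbundle of $T_A|_X \cong V^\vee \otimes \caO_X$, giving the short exact sequence $0 \to T_X \to V^\vee \otimes \caO_X \to N_{X/A} \to 0$. Hence $\caK = N_{X/A}$ is locally free of rank $q - d$, so $\Sym^r \caK$ is a vector bundle, $\caHom(\Sym^r \caK, \caO_X) = \Sym^r N_{X/A}^\vee$, and $\caExt^i(\Sym^r \caK, \caO_X) = 0$ for every $i > 0$. By Lemma \ref{lem_ext} this yields $\caH^0 = \Sym^r N_{X/A}^\vee$ and $\caH^i = 0$ for $0 < i < r$. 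When $r \leq d$ the complex has one further term $\caH^r$, which is the cokernel of the last map $V \otimes \Omega_X^{r-1} \to \Omega_X^r$; this map is surjective because the evaluation $V \otimes \caO_X \twoheadrightarrow \Omega_X^1$ is surjective (the 1-forms on $A$ already restrict to generators of $\Omega_X^1$), so $\caH^r = 0$ as well. In all cases $\caH^i = 0$ for every $i > 0$.

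Feeding this into the second spectral sequence ${''E}_2^{i,j} = H^i(X, \caH^j)$, only the row $j = 0$ survives, so it degenerates at ${''E}_2$ and yields $\mH^n(X, \caC_{r,W}) \cong H^n(X, \Sym^r N_{X/A}^\vee)$. By the vanishing hypothesis this is zero for every $n < N$. Proposition \ref{prop-degen} now supplies that the first spectral sequence ${'E}$ also degenerates at ${'E}_2$, so each ${'E}_2^{i,j}$ is a graded piece of a filtration on $\mH^{i+j}(X, \caC_{r,W})$. The vanishing of the abutment in degrees less than $N$ therefore forces ${'E}_2^{i,j} = 0$ whenever $i + j < N$. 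Since ${'E}_2^{i,j}$ is by construction the $i$-th cohomology of the complex $C^j_{r,W}$, this says precisely that for each $j < N$ the complex $C^j_{r,W}$ is exact in its first $N - j$ steps, which is the exactness claim for (\ref{complex-q}).

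The only step beyond routine spectral sequence bookkeeping is the vanishing of $\caH^r$ when $r \leq d$, a case not covered by Lemma \ref{lem_ext}; but this reduces at once to the surjectivity of the evaluation map, which is automatic for a subvariety of an abelian variety with $H^0(X, \Omega_X^1) = H^0(A, \Omega_A^1)$.
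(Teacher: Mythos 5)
Your proof is correct and follows essentially the same route as the paper: identify $\caK = N_{X/A}$, use its local freeness to kill the higher $\caExt$ sheaves so that the second spectral sequence collapses to $\mH^n\left(X,\caC_{r,V}\right) \cong H^n\left(X,\Sym^r N_{X/A}^{\vee}\right)$, and combine the resulting vanishing with the degeneration of ${'E}$ from Proposition \ref{prop-degen}. Your explicit check that $\caH^r=0$ when $r\le d$ (via surjectivity of $V\otimes\caO_X\to\Omega_X^1$) is a welcome detail the paper leaves implicit, though it is not strictly needed here since that row can only contribute in total degree $\ge r \ge N$.
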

\begin{proof}
By the previous discussion, since $\Sym^r N_{X/A}$ is locally free, the spectral sequence
$${''E}_2^{i,j}=H^i\left(X,\caExt_{\caO_X}^j\left(\Sym^r N_{X/A},\caO_X\right)\right) \Ra \mH^n\left(X,\caC_{r,V}\right)$$
is degenerate and gives $\mH^i\left(X,\caC_{r,V}\right) = H^i\left(X,\Sym^r N_{X/A}^{\vee}\right)=0$ for any $i<N$.
These vanishings, combined with Proposition \ref{prop-degen}, imply the vanishing of ${'E}_2^{i,j}$ for all $i+j<N$. Recalling that ${'E}_2^{i,j}$ is the cohomology of $C_{r,V}^j$ at the $i$-th step, the claim follows directly.
\end{proof}

\begin{cor} \label{cor-q}
If $X$ is as in the above Proposition, then
\beqn 
\sum_{i=0}^p (-1)^{p-i}\binom{r-i+q\left(X\right)-1}{q\left(X\right)-1}h^{i,j}(X) \geq 0
\enqn
for all $p + j\leq N$. If furthermore $H^i\left(X,\Sym^r N_{X/A}^{\vee}\right)=0$ for all $0 < r < N-j$, then 
$$h^{i,j}\left(X\right) \geq \binom{q\left(X\right)}{i}\binom{q\left(X\right)}{j}$$
for all $i+j\leq n$.
\end{cor}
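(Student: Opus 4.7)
The plan is to deduce both inequalities from Proposition \ref{prop-q} in essentially the same way that Corollaries \ref{cor-k<d-1} and \ref{cor-k<d-2} are deduced from Theorem \ref{thm-exact-k<d}; the only difference is that Theorem \ref{thm-exact-k<d} is replaced by Proposition \ref{prop-q}, which supplies the exactness of the relevant complex in the subvariety-of-Abelian-variety setting (where $k=q$ and $W=V$).

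For the first inequality I would invoke Proposition \ref{prop-q} directly: under the stated hypothesis, the complex (\ref{complex-q}) has vanishing cohomology at every position $i<N-j$. Truncating at any position $p$ with $p+j\le N$, the only possibly non-zero cohomology group is at the last step $p$, so the signed Euler characteristic of the truncation satisfies $(-1)^p\chi=\dim H^p\ge 0$. Rewriting this using $\dim\Sym^{r-i}V=\binom{r-i+q-1}{q-1}$ and $\dim H^j(X,\Omega_X^i)=h^{i,j}(X)$ yields
\[
\sum_{i=0}^{p}(-1)^{p-i}\binom{r-i+q-1}{q-1}h^{i,j}(X)=(-1)^p\chi\ge 0,
\]
which is the asserted first inequality.

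For the second inequality I would reproduce the combinatorial telescoping used in the proof of Corollary \ref{cor-k<d-2}. Specialise the first inequality to $r=p$ and set
\[
M_{p,j}=\sum_{i=0}^{p}(-1)^{p-i}\binom{p-i+q-1}{q-1}h^{i,j}(X)\ge 0.
\]
The extra vanishing hypothesis $H^i(X,\Sym^r N_{X/A}^{\vee})=0$ in the prescribed range of $r$ provides, through Proposition \ref{prop-q} applied with $r=i$, the same non-negativity $M_{i,j}\ge 0$ for every $i$ with $1\le i\le p$; the case $i=0$ is trivially $M_{0,j}=h^{0,j}(X)$. Using identity (\ref{combin}) with $A=q$, $B=p-m$ exactly as in the proof of Corollary \ref{cor-k<d-2}, one checks
\[
\sum_{i=0}^{p}\binom{q}{p-i}M_{i,j}=h^{p,j}(X),
\]
whence $h^{p,j}(X)\ge\binom{q}{p}M_{0,j}=\binom{q}{p}h^{0,j}(X)$. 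Combining this with Hodge symmetry $h^{0,j}(X)=h^{j,0}(X)$ and applying the same bound with the roles of $p$ and $j$ swapped (starting from $h^{0,0}(X)=1$) gives $h^{j,0}(X)\ge\binom{q}{j}$, and hence $h^{p,j}(X)\ge\binom{q}{p}\binom{q}{j}$.

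The only real obstacle is bookkeeping: one must check that the hypothesis \emph{``$H^i(X,\Sym^r N_{X/A}^{\vee})=0$ for $0<r<N-j$''} supplies $M_{i,j}\ge 0$ precisely for the indices $i$ needed in the telescoping, in the full range of $i+j$ claimed by the corollary. Everything else is formal once Proposition \ref{prop-q} and the identity (\ref{combin}) are granted.
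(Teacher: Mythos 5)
Your proposal is correct and follows essentially the same route as the paper: the first inequality is exactly the dimension count on the truncations of the exact complex (\ref{complex-q}) supplied by Proposition \ref{prop-q}, and the second is the telescoping argument of Corollary \ref{cor-k<d-2} run with $k=q$, using the extra vanishing hypothesis to get $M_{i,j}\geq 0$ for each intermediate $r=i$. The paper's own proof is just a two-line reference to these same two steps, so your write-up (including the bookkeeping caveat about the range of $r$) is a faithful expansion of it.
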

\begin{proof}
The first assertion follows at once by dimension counting on (\ref{complex-q}), and the second one follows as in the proof of Corollary \ref{cor-k<d-2}.
\end{proof}

The main drawback of Proposition \ref{prop-q} is the difficulty to check the vanishing of $H^i\left(X,\Sym^r N_{X/A}^{\vee}\right)$. However, as the next example shows, some smooth intersections of ample divisors satisfy the vanishing of $H^i\left(X,\Sym^r N_{X/A}^{\vee}\right)$ for all $i < \dim X$, and the inequalities of Corollary \ref{cor-q} are sharp for $i+j<n$.

\begin{ex}
Let $D_1,\ldots,D_c \subseteq A$ be ample divisors on an Abelian variety such that the partial intersections $X_k = D_1 \cap \ldots \cap D_k$ are smooth, and let $X = X_c$. Then $H^i\left(X,\Sym^rN_{X/A}^{\vee}\right) = 0$ for every $r > 0$ and $i < \dim X = q\left(A\right)-c$. Moreover,
\beqn
h^{i,j}\left(X\right) = \binom{q\left(X\right)}{i}\binom{q\left(X\right)}{j}
\enqn
as long as $i+j < \dim X$.

As for the vanishing of the cohomology groups, one shows by induction on $k$ and $r$ that more generally $H^i\left(X_k,\left(\Sym^rN_{X_k/A}^{\vee}\right)\left(-D\right)\right) = 0$ for all $r>0, i<\dim X_k$, where $D$ is either 0 or an ample divisor on $A$. Indeed, for $k=1$ we have $N_{X_1/A}^{\vee} \cong \caO_{X_1}\left(-X_1\right)$ and $\Sym^rN_{X_1/A}^{\vee} \cong \caO_{X_1}\left(-rX_1\right)$, and the long exact cohomology sequence of
$$0 \longra \caO_A\left(-\left(r+1\right)X_1-D\right) \longra \caO_A\left(-rX_1-D\right) \longra \Sym^rN_{X_1/A}^{\vee}\left(-D\right) \cong \caO_{X_1}\left(-rX_1-D\right) \longra 0$$
combined with the Kodaira vanishing theorem gives the assertion for $k=1, r>0$. For bigger $k$, one first proves the assertion for $r=1$ as follows: Kodaira vanishing applied to
\beq
\label{induction-step}
0 \longra N_{X_{k-1}/A}^{\vee}\left(-D\right)_{|X_k} \longra N_{X_k/A}^{\vee}\left(-D\right) \longra N_{X_k/X_{k-1}}^{\vee}\left(-D\right) \cong \caO_{X_k}\left(-D_k-D\right) \longra 0
\enq
implies that
\beq
\label{first-iso}
H^i\left(X_k,N_{X_{k-1}/A}^{\vee}\left(-D\right)_{|X_k}\right) \cong H^i\left(X_k,N_{X_k/A}^{\vee}\left(-D\right)\right)
\enq
for all $i < \dim X_k$. Now the induction hypothesis applied to the exact sequence
\beq
\label{restriction}
0 \longra N_{X_{k-1}/A}^{\vee}\left(-D-D_k\right) \longra N_{X_{k-1}/A}^{\vee}\left(-D\right) \longra N_{X_{k-1}/A}^{\vee}\left(-D\right)_{|X_k} \longra 0
\enq
gives the vanishing of the groups in (\ref{first-iso}), since
$$H^i\left(X_{k-1},N_{X_{k-1}/A}^{\vee}\left(-D-D_k\right)\right) = H^i\left(X_{k-1},N_{X_{k-1}/A}^{\vee}\left(-D\right)\right) = 0$$
for all $i < \dim X_{k-1} = \dim X_k +1$. It remains to prove the inductive step for $r$, which follows in the same way by using the exact sequence
$$0 \longra \left(\Sym^rN_{X_{k-1}/A}^{\vee}\right)\left(-D\right)_{|X_k} \longra \left(\Sym^rN_{X_k/A}^{\vee}\right)\left(-D\right) \longra \left(\Sym^{r-1}N_{X_k/A}^{\vee}\right)\left(-D-D_k\right) \longra 0$$
induced by (\ref{induction-step}), and the corresponding analogue to (\ref{restriction}).

\end{ex}


\section{More on $h^{2,0}(X)$}
\label{sect:h20}

In \cite{VGA1} we proved a lower bound for the $h^{2,0}$ of an irregular variety of any dimension without higher irrational pencils. In this section we will compare it with the inequalities obtained in Corollaries \ref{cor-k<d-1} and \ref{cor-k<d-2}.

To be precise, in \cite{VGA1} we proved that if $X$ does not admit any higher irrational pencil, then the complex
\beqn 
0 \longra \Sym^2 W \longra W \otimes H^0\left(X,\Omega_X^1\right) \longra H^0\left(X,\Omega_X^2\right)
\enqn
is exact for generic $W \subseteq H^0\left(X,\Omega_X^1\right)$ of even dimension $\dim W = 2k' < 2\dim X$. This exactness gives the inequalities
\beq \label{ineq-VGA1}
h^{2,0}(X) \geq 2k'q(X)-\binom{2k'+1}{2} \qquad \forall \, k'<d,
\enq
and taking the maximum over all possible $k'$ we obtained the final
\begin{thm}[\cite{VGA1}, Theorem 1.1] \label{thm-VGA1}
Let $X$ be an irregular variety without higher irrational pencils. Then it holds
\beqn 
h^{2,0}\left(X\right) \geq
\begin{cases}
\binom{q\left(X\right)}{2} & \text{ if } q\left(X\right) \leq 2 \dim X -1, \\
2\left(\dim X-1\right)q\left(X\right) - \binom{2\dim X-1}{2} & \text otherwise.
\end{cases}
\enqn
\end{thm}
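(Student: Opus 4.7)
My plan is to treat the theorem as a pure optimization problem built on the family of inequalities (\ref{ineq-VGA1}) established in \cite{VGA1}, namely $h^{2,0}(X) \geq f(k') := 2k'q - \binom{2k'+1}{2}$ for every integer $k'$ with $1 \leq k' \leq d-1$. I would first rewrite this as $f(k') = 2k'(q-k') - k'$, observing that $f$ is a concave quadratic in the real variable $k'$ with continuous maximum at $k^* = (2q-1)/4$. The theorem then reduces to maximizing $f$ over integers $k' \in \{1, \ldots, d-1\}$ and matching the result against the two regimes in the statement, which correspond to whether $k^*$ lies to the left or to the right of the admissible endpoint $d-1$.

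For the first regime $q \leq 2d-1$, I would note that $k^* \leq (4d-3)/4 < d$, so the unconstrained optimum lies strictly inside the admissible range. A short case split on the parity of $q$ then shows that the optimal integer choice is $k' = q/2$ when $q$ is even and $k' = (q-1)/2$ when $q$ is odd; a routine computation gives $f(k') = \binom{q}{2}$ in both cases. The hypothesis $q \leq 2d - 1$ is precisely what guarantees $k' \leq d - 1$, so this value is admissible.

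For the opposite regime $q \geq 2d$ I would observe $k^* \geq (4d-1)/4 > d-1$, so by concavity the maximum of $f$ over integers $k' \leq d-1$ must be attained at the right endpoint $k' = d-1$. Evaluating there gives $f(d-1) = 2(d-1)q - \binom{2d-1}{2}$, matching the second branch of the statement. The hard part, if any, is really just the parity bookkeeping in the first regime and checking admissibility of the chosen $k'$; all the geometric content has already been packaged into (\ref{ineq-VGA1}), so the argument is purely arithmetic once that inequality is in hand.
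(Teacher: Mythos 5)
Your proposal is correct and takes essentially the same route as the paper, which obtains the theorem by "taking the maximum over all possible $k'$" of the family of inequalities (\ref{ineq-VGA1}); your parity analysis for $q\leq 2d-1$ and the endpoint evaluation $f(d-1)=2(d-1)q-\binom{2d-1}{2}$ for $q\geq 2d$ supply exactly the arithmetic the paper leaves implicit. (One harmless imprecision: under $q\leq 2d-1$ the real vertex $k^{\ast}=(2q-1)/4\leq d-\tfrac{3}{4}$ need not lie in $[1,d-1]$, but this does not matter since you verify directly that the chosen integer $k'=\lfloor q/2\rfloor$ is admissible and attains $\binom{q}{2}$.)
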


In order to obtain such a result with the techniques of the present article, we must use Theorem \ref{thm-exact-k<d} for the case $r=2,j=0$. One problem that overcomes is the different nature of the hypothesis. Indeed, if the Albanese map of the variety is ramified, there is no obvious relation between the existence of non-degenerate subspaces and the non-existence of fibrations over varieties of Albanese general type.

As for the inequalities, if $q\left(X\right) \geq 2\dim X$, the strongest case of (\ref{ineq-VGA1}) is obtained for $k'=d-1$, hence $k=2d-2$. Such an inequality is impossible to obtain with Theorem \ref{thm-exact-k<d}, since it requires $k\leq d-1$, which is very far from $k=2d-2$. However, as we will see next, it is possible to obtain better bounds in at least two ways (with stronger hypotheses).

\subsection{Bounds from non-vanishing of Chern classes}
\label{sect:non-vanishing}

Assume first that Theorem \ref{thm-exact-k<d} holds for $r=2,j=0$ and {\em every} subspace $W \in \mG = Gr\left(k,H^0\left(X,\Omega_X^1\right)\right)$ for some fixed $k \leq d-1$. In this case, the Grassmannian BGG complex on $\mG$
\beq \label{Grass-c20}
C_2^0: \quad 0 \ra \Sym^2 S \ra S \otimes H^0\left(X,\Omega_X^1\right) \ra H^0\left(X,\Omega_X^2\right) \otimes \caO_{\mG} \ra \caF_{2,2}^0 \ra 0
\enq
is {\em everywhere} exact, so the cokernel $\caF = \caF_{2,2}^0$ is also a vector bundle. If we were able to compute the (total) Chern class of $\caF$, $c\left(\caF\right)$, we would obtain estimates on $\rk \caF$ which in turn will give lower bounds on 
\beqn
h^{2,0}\left(X\right) = \rk\left(\caF\right) + kq - \binom{k+1}{2}.
\enqn

Suppose for a moment that the Chern class of $\caF$ of degree $\dim \mG = k\left(q-k\right)$ is non-zero. This would imply that $\caF$ has rank at least $k\left(q-k\right)$, and therefore
\beq \label{second-bound}
h^{2,0}\left(X\right) \geq k\left(q-k\right) + kq - \binom{k+1}{2} = 2kq - \left(k^2 + \binom{k+1}{2}\right),
\enq
which has the same asymptotic behaviour as (\ref{ineq-VGA1}). Furthermore, since $k^2 + \binom{k+1}{2} < \binom{2k+1}{2}$, we would obtain a slightly stronger bound.

The problem is now reduced to compute the Chern class
\beqn 
c\left(\caF\right) = \frac{c\left(H^0\left(X,\Omega_X^2\right) \otimes \caO_{\mG}\right) c\left(\Sym^2 S\right)}{c\left(S \otimes H^0\left(X,\Omega_X^1\right)\right)} = c\left(\Sym^2 S\right) c\left(Q\right)^q,
\enqn
(since $c\left(S\right)^{-1} = c\left(Q\right)$). In general, this computation turns out to be very complicated. Indeed, although the power $c\left(Q\right)^q$ is easy to describe in terms of the Schubert classes of $\mG$, the formula for the Chern class of a symmetric power of some vector bundle $E$ depends on the rank of $E$, and we do not know of any explicit computation, even in the (rather concrete) case of tautological bundles over a Grassmannian.

Therefore, we have been forced to make explicit computations fixing both $k=2,3,4$ and $q = k+1,\ldots,12$. In these cases, the Chern classes of $\caF$ of highest degree vanish, hence the bounds (\ref{second-bound}) are out of reach with this last method. Furthermore, there is some pattern in the Schubert classes whose coefficient is non-zero, which leads us to formulate the following conjecture (recall the notation for Schubert classes introduced at the beginning).

\begin{conj} \label{conj-h20}
Let $\mu$ be the partition $\left(q-k-1,q-k-2,q-k-3,\ldots,q-2k\right)$ if $q \geq 2k$, and $\left(q-k-1,q-k-2,\ldots,1,0,\ldots\right)$ if $q<2k$. The coefficient of the Schubert class $\sigma_{\lambda}$ in $c(\caF)$ is zero for every $\lambda$ bigger than\footnote{We say that a partition $\left(\lambda_1\geq\ldots\geq\lambda_k\right)$ is bigger than $\left(\mu_1\geq\ldots\geq\mu_k\right)$ if $\lambda_i \geq \mu_i \, \forall i$, with strict inequality for some $i$.} $\mu$, while the coefficient of $\sigma_{\mu}$ is non-zero.
\end{conj}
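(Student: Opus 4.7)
The first step is a reduction to tautological data on the Grassmannian. The exact sequence (\ref{Grass-c20}) defining $\caF$ differs from the universal Koszul-like sequence on $\mG$
\beqn
0 \longra \Sym^2 S \longra S \otimes V \otimes \caO_\mG \longra \Lambda^2 V \otimes \caO_\mG \longra \Lambda^2 Q \longra 0
\enqn
(exact everywhere, since the kernel of $s\otimes v \mapsto s\wedge v$ is precisely $\Sym^2 S$ and its image is $S\wedge V = \ker(\Lambda^2 V\to \Lambda^2 Q)$) only in its third term, which is trivial in both cases. Multiplicativity of Chern classes therefore gives
\beqn
c(\caF) \;=\; c(\Sym^2 S)\cdot c(Q)^q \;=\; c(\Lambda^2 Q),
\enqn
so the conjecture reduces to a purely Schubert-theoretic statement about $\Lambda^2 Q$ on $\mG_k = Gr(k,q)$.

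The second step is to recognise $c(\Lambda^2 Q)$ as a translated staircase Schur polynomial. If $y_1,\ldots,y_n$ (with $n=q-k$) are the Chern roots of $Q$, then $c(\Lambda^2 Q) = \prod_{i<j}(1+y_i+y_j)$, and Weyl's denominator formula identifies $\prod_{i<j}(t_i+t_j) = s_{\rho_n}(t_1,\ldots,t_n)$, where $\rho_n=(n-1,n-2,\ldots,1,0)$ is the staircase. Substituting $t_i = y_i + \tfrac12$ gives
\beqn
c(\Lambda^2 Q) \;=\; s_{\rho_n}\!\bigl(y_1+\tfrac12,\ldots,y_n+\tfrac12\bigr).
\enqn
Expand this in the Schur basis as $s_{\rho_n}(y+\tfrac12) = \sum_\nu a_\nu\, s_\nu(y)$. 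Recalling that in $H^*(\mG_k)$ one has $s_\nu(y)=\sigma_{\nu^T}$ exactly when $\nu_1\leq k$, while $s_\nu(y)=0$ otherwise (for $\nu_1>k$ the entire top row of the Jacobi--Trudi determinant consists of $h_j(y)$ with $j>k$, all of which lie in the Schubert ideal $\langle h_{k+1},h_{k+2},\ldots\rangle$), the conjecture reduces to the combinatorial claim
\begin{center}
$({\star})$\quad\emph{$a_\nu\neq 0$ if and only if $\nu\subseteq\rho_n$ (as Young diagrams).}
\end{center}

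Granted $({\star})$, the conjecture follows directly. A componentwise check shows that the maximum $\nu\subseteq\rho_n$ satisfying the box constraint $\nu_1\leq k$ is
\beqn
\nu_{\max}=\bigl(\underbrace{k,\ldots,k}_{n-k},k-1,k-2,\ldots,1\bigr) \text{ when } n\geq k, \qquad \nu_{\max}=\rho_n \text{ when } n<k,
\enqn
whose transpose is exactly the partition $\mu$ from the conjecture in both cases. Thus $\sigma_\mu=s_{\mu^T}(y)$ appears in $c(\caF)$ with coefficient $a_{\mu^T}\neq 0$; and any $\sigma_\lambda$ with $\lambda\supsetneq\mu$ corresponds to $\lambda^T\supsetneq\mu^T=\nu_{\max}$ with $\lambda^T_1=\ell(\lambda)\leq k$, which by maximality of $\nu_{\max}$ forces $\lambda^T\not\subseteq\rho_n$, and hence $a_{\lambda^T}=0$.

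The main obstacle is establishing $({\star})$. The natural tool is the classical expansion
\beqn
s_\lambda(y_1+c,\ldots,y_n+c) \;=\; \sum_{\mu\subseteq\lambda} c^{|\lambda/\mu|}\,\binom{\lambda}{\mu}\,s_\mu(y),
\enqn
in which the generalized binomial coefficients $\binom{\lambda}{\mu}$ of partitions (in the sense of Okounkov--Olshanski's shifted Schur functions, or equivalently Molev's factorial Schur polynomials) are positive integers supported exactly on $\mu\subseteq\lambda$. Plugging in $\lambda=\rho_n$ and $c=\tfrac12$ yields both directions of $({\star})$ at once. Should a citation in the precise form required be unavailable, a fallback is an induction on $n$ via the factorisation $s_{\rho_n}(y+\tfrac12)|_{y_n=0} = c(Q')\cdot s_{\rho_{n-1}}(y'+\tfrac12)$, with $Q'$ the truncation of $Q$ to its first $n-1$ Chern roots; Pieri's rule then controls the multiplication by $c(Q')$ and propagates the non-vanishing of coefficients indexed by $\nu\subseteq\rho_{n-1}$ to the full set $\nu\subseteq\rho_n$.
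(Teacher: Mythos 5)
The paper offers no proof of this statement: it is stated as a conjecture, supported only by explicit machine computations for $k=2,3,4$ and $q\leq 12$. Your argument is, as far as I can check, a complete and correct proof, so there is no proof in the paper to compare it with; let me instead record why each stage holds up. The reduction $c(\caF)=c(\Sym^2 S)\,c(Q)^q=c(\Lambda^2 Q)$ is correct (your four-term sequence is exact, or equivalently $[\Sym^2 S]-[S\otimes V]+[\Lambda^2 V\otimes\caO_{\mG}]=[\Lambda^2 Q]$ in $K(\mG)$). The identity $\prod_{i<j}(t_i+t_j)=s_{\rho_n}(t)$ follows from the bialternant formula, the numerator being a Vandermonde determinant in the $t_i^2$. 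The dictionary $s_{\nu}(y)=\sigma_{\nu^{T}}$ for $\nu_1\leq k$ and $s_{\nu}(y)=0$ for $\nu_1>k$ is the dual Jacobi--Trudi/Giambelli statement, consistent with the paper's convention $c_a(Q)=\sigma_{(a)}$, and the transpose bookkeeping identifying $\nu_{\max}^{T}$ with the conjectured $\mu$ checks out in both regimes, including the boundary values $q=2k$ and $q=2k+1$. The only point resting on an outside citation is $(\star)$, and there you should pin the reference down precisely: expanding the bialternant for $s_{\rho_n}(y+c)$ by Cauchy--Binet gives the coefficient of $s_{\nu}$ as $c^{|\rho_n|-|\nu|}\det\bigl(\binom{2(n-i)}{\nu_j+n-j}\bigr)_{i,j=1}^{n}$, and the Gessel--Viennot theorem on binomial determinants (\emph{Binomial determinants, paths, and hook length formulae}, Adv.\ Math.\ 58 (1985)) says that such a determinant, with both index sequences strictly decreasing, is nonnegative and is strictly positive exactly when $\nu_j+n-j\leq 2(n-j)$ for all $j$, i.e.\ exactly when $\nu\subseteq\rho_n$; this is sharper and easier to quote than the Okounkov--Olshanski binomial formula, though that route also works. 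Your fallback induction via Pieri is too sketchy to stand on its own (controlling the cancellations after multiplying by $c(Q')$ is precisely the hard point), but it is not needed. I checked the outcome against $k=1$, where the coefficient of $\sigma_{q-1}$ vanishes and that of $\sigma_{q-2}$ is the Catalan number $C_{q-2}$, and against $k=2$, $q=5$, where one gets $c(\caF)=1+2\sigma_{1}+2\sigma_{2}+\sigma_{1,1}+\sigma_{2,1}$, both in agreement with the conjecture. You should present this as a theorem superseding the conjecture, which would in particular make Proposition \ref{pro-conj-h20} unconditional.
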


As we said, we have checked Conjecture \ref{conj-h20} for $k=2,3,4$ and $q=k+1,\ldots,12$. In any case, if Conjecture \ref{conj-h20} holds true, then we obtain the following

\begin{pro} \label{pro-conj-h20}
If the Grassmannian BGG complex (\ref{Grass-c20}) of an irregular variety $X$ is everywhere exact and Conjecture \ref{conj-h20} holds, then
\beqn
h^{2,0}\left(X\right) \geq \begin{cases} \binom{q}{2} \quad \mbox{if } q \leq 2k, \\ 2kq - \binom{2k+1}{2} \quad \mbox{if } q \geq 2k. \end{cases}
\enqn
\end{pro}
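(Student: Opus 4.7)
The plan is to extract a lower bound on the rank of $\caF$ from the non-vanishing of one of its Chern classes, as provided by Conjecture \ref{conj-h20}, and then translate this into the desired inequality via rank-counting along the resolution (\ref{Grass-c20}).

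First I would observe that the everywhere-exactness of (\ref{Grass-c20}) makes $\caF$ a vector bundle on $\mG = Gr(k,V)$, and counting ranks along the four-term resolution yields
\beqn
\rk \caF = h^{2,0}(X) - kq + \binom{k+1}{2}.
\enqn
By multiplicativity of the total Chern class along the resolution, together with $c(S)^{-1} = c(Q)$ coming from the tautological sequence $0 \to S \to V \otimes \caO_\mG \to Q \to 0$, one recovers the identity $c(\caF) = c(\Sym^2 S) \cdot c(Q)^q$ already stated before the conjecture.

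Granting Conjecture \ref{conj-h20}, the coefficient of $\sigma_\mu$ in $c(\caF)$ is non-zero, so the component $c_{|\mu|}(\caF)$ of degree $|\mu|$ is a non-zero class in $H^{2|\mu|}(\mG,\mZ)$. The standard vanishing $c_i(\caE) = 0$ for $i > \rk \caE$ then forces $\rk \caF \geq |\mu|$, equivalently
\beqn
h^{2,0}(X) \geq |\mu| + kq - \binom{k+1}{2}.
\enqn

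It then remains to compute $|\mu|$ in each regime. For $q \geq 2k$ the partition $\mu = (q-k-1, q-k-2, \ldots, q-2k)$ has $k$ non-negative parts, so
\beqn
|\mu| = \sum_{i=1}^{k}(q-k-i) = k(q-k) - \binom{k+1}{2},
\enqn
and substituting gives $h^{2,0}(X) \geq 2kq - k^2 - 2\binom{k+1}{2} = 2kq - \binom{2k+1}{2}$, using $k^2 + 2\binom{k+1}{2} = 2k^2 + k = \binom{2k+1}{2}$. For $q \leq 2k$ the non-zero entries of $\mu$ are $q-k-1, q-k-2, \ldots, 1$, hence $|\mu| = \binom{q-k}{2}$, and the elementary identity $\binom{q}{2} - \binom{q-k}{2} = kq - \binom{k+1}{2}$ (verified by expanding both sides) yields $h^{2,0}(X) \geq \binom{q}{2}$.

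The argument is essentially bookkeeping once the conjecture is granted; the genuine obstacle lies outside this deduction, namely in establishing Conjecture \ref{conj-h20} itself, which requires controlling the Schubert-class expansion of $c(\Sym^2 S) \cdot c(Q)^q$, a computation the paper notes has so far been carried out only for $k \leq 4$ and $q \leq 12$.
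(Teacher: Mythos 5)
Your argument is correct and is exactly the paper's proof: the non-vanishing of the coefficient of $\sigma_{\mu}$ forces $c_{|\mu|}(\caF)\neq 0$, hence $\rk\caF\geq|\mu|$ (which equals $\binom{q-k}{2}$ or $\tfrac{k(2q-3k-1)}{2}$ in the two regimes), and adding $kq-\binom{k+1}{2}$ gives the stated bounds. Your computations of $|\mu|$ and the binomial identities all check out; you have merely written out the bookkeeping that the paper leaves implicit.
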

\begin{proof}
Computing the codimension of $\sigma_{\mu}$ we obtain
\beqn
\rk\left(\caF\right) \geq \begin{cases} \binom{q-k}{2} \quad \mbox{if } q \leq 2k, \\ \frac{k\left(2q-3k-1\right)}{2} \quad \mbox{if } q \geq 2k, \end{cases}
\enqn
and adding it to $kq - \binom{k+1}{2}$ we obtain the wanted bound.
\end{proof}

\subsection{Bounds from positivity of Chern Classes}

The second method to improve Corollaries \ref{cor-k<d-1} and \ref{cor-k<d-2} uses the fact that the last Grassmannian BGG sheaves are globally generated. Although it can be used with any of the complexes $C_r^j$, we will focus on the case $C_3^0$, since it leads to more inequalitites involving $h=h^{2,0}$ and $q$ which we can compare with the previous ones. This approach generalizes some parts of \cite{LP} and \cite{Lom}.

Consider thus the complex $C_3^0$ over the Grasmannian $\mG_k$ for some $k$,
\begin{multline} \label{Grass-C30}
C_3^0: \quad 0 \longra \Sym^3 S \longra \Sym^2 S \otimes H^0\left(X,\Omega_X^1\right) \longra \\
\longra S \otimes H^0\left(X,\Omega_X^2\right) \longra H^0\left(X,\Omega_X^3\right) \otimes \caO_{\mG} \longra \caG=\caF_{3,3}^0 \longra 0,
\end{multline}
and assume that it is exact as a sequence of {\em sheaves} on $\mG$. As in the previous discussion, we do not know of better (geometric) hypothesis to be put directly on the variety $X$ and guaranteeing the exactness of (\ref{Grass-C30}).

Since $\caG$ is generated by global sections (it is a quotient of a trivial bundle), all its Chern classes must be represented by effective cycles, and this gives some inequalities involving $h,q$ and $k$ (the rank of $S$).

Without using the global generation, one can truncate the complex after $S \otimes H^0\left(X,\Omega_X^2\right)$ and use that the cokernel must have non-negative rank. This implies
\beqn 
h^{2,0} \geq \frac{1}{k}\left(q\binom{k+1}{2} - \binom{k+2}{3}\right) = \frac{k+1}{2}q-\frac{(k+2)(k+1)}{6},
\enqn
which is not better than $h \geq kq-\binom{k+1}{2}$ (the one obtained from the exactness of some $C_{2,W}^0$).

In order to use the global generation, we compute the lower terms of
\beqn
c\left(\caG\right) = \frac{c\left(\Sym^2 S\right)^q}{c\left(S\right)^hc\left(\Sym^3 S\right)} = \sum_{\lambda} g_{\lambda} \sigma_{\lambda},
\enqn
where $g_{\lambda} \in \mQ[h,q,k]$. Then, the family of inequalities we want to describe as explicitly as possible is $\left\{g_{\lambda} \geq 0\right\}$.

\subsubsection{Inequality from $c_1(\caG)\geq 0$}

From
\beqn
c_1\left(\caG\right) = q c_1\left(\Sym^2 S\right) + h c_1\left(Q\right) - c_1\left(\Sym^3 S\right) = \left(h-q\left(k+1\right)+\binom{k+2}{2}\right) \sigma_1
\enqn
we obtain the inequality

\beq \label{ineq_c1}
h \geq q(k+1)-\binom{k+2}{2}.
\enq

Note that this inequality is the same that we would have obtained from the exactness of $C^0_{2,W}$ for some $W$ of dimension $k+1$.

\subsubsection{Inequality from $c_2(\caG)\geq 0$}

After computing
\beqn
c_2\left(\caG\right) = g_2\sigma_2 + g_{1,1}\sigma_{1,1}
\enqn
we obtain
\begin{multline*} 
g_2 = \frac{1}{2}h^2 - \left(q(k+1)-\binom{k+2}{2}-\frac{1}{2}\right)h + \\
 + \left(\binom{q}{2}(k+1)^2-\frac{1}{2}q\left(k+2\right)\left(k^2+k+2\right)+\frac{1}{8}\left(k+3\right)\left(k+2\right)\left(k^2+k+4\right)\right)
\end{multline*}
and
\beqn 
g_{1,1} = \frac{1}{2}h^2 - \left(q(k+1)-\binom{k+2}{2}+\frac{1}{2}\right)h + \left(\binom{q}{2}(k+1)^2-qk\binom{k+2}{2}+3\binom{k+3}{4}\right).
\enqn

Considering $g_2$ and $g_{1,1}$ as quadratic polynomials in $h$, we can compute their roots formally, which are (for $g_2$ and $g_{1,1}$ respectively)
\beqn
\alpha_{\pm}=\left(q(k+1)-\binom{k+2}{2}-\frac{1}{2}\right)\pm\frac{1}{2}\sqrt{8(q-k)-15}
\enqn
and
\beqn 
\beta_{\pm}=\left(q(k+1)-\binom{k+2}{2}+\frac{1}{2}\right)\pm\frac{1}{2}.
\enqn
First of all, note that $\beta_{\pm}$ are consecutive integers, so $g_{1,1} \geq 0$ holds for all integers $h,k,q$ and it does not give any bound at all. Secondly, the roots $\alpha_{\pm}$ are not defined if $8(q-k)-15 < 0$, which is equivalent to $k \geq q-1$ (both $q$ and $k$ are integers). Therefore, for $k = q-1,q$ we again do not obtain any new bound. Assuming $k \leq q-2$, $g_2\geq0$ implies that either $h \geq \alpha_+$ or $h \leq \alpha_-$. But since $\alpha_- < q(k+1)-\binom{k+2}{2}$ and we already know that $h \geq q(k+1)-\binom{k+2}{2}$ (inequality (\ref{ineq_c1})), the option $h \leq \alpha_-$ is impossible, and we only obtain the following

\begin{pro}
If $X$ is an irregular variety and $k \leq q\left(X\right)-2$ is such that (\ref{Grass-C30}) is an exact sequence of sheaves on $\mG_k$, then
\beq \label{final_ineq_c2}
h^{2,0}\left(X\right) \geq q(k+1)-\binom{k+2}{2}+\frac{1}{2}\left(\sqrt{8q-(8k+15)}-1\right).
\enq
\end{pro}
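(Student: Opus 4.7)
The plan is to derive the inequality from the effectivity of $c_2(\caG)$, where $\caG$ denotes the rightmost sheaf in (\ref{Grass-C30}).

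First I would note that, by hypothesis, $\caG$ is realized as a quotient of the trivial bundle $H^0(X, \Omega_X^3) \otimes \caO_{\mG}$, and is therefore globally generated. The standard degeneracy-locus construction then represents $c_2(\caG)$ by an effective codimension-$2$ cycle on $\mG = \mG_k$. A classical fact about Grassmannians (following from the moving lemma for Schubert cycles under the natural $GL(V)$-action together with Poincar\'e duality between complementary Schubert classes) ensures that any effective codimension-$i$ class has non-negative coefficients in the Schubert basis. Applied to the expression $c_2(\caG) = g_2\sigma_2 + g_{1,1}\sigma_{1,1}$ computed in the excerpt, this yields $g_2 \geq 0$ and $g_{1,1} \geq 0$.

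Next I would analyze each inequality as a quadratic condition in $h$. The inequality $g_{1,1} \geq 0$ is automatic for integer values of $h$, since the two roots $\beta_\pm$ differ by exactly $1$. The inequality $g_2 \geq 0$ carries the real content: its discriminant $8(q-k)-15$ is positive precisely under the assumption $k \leq q-2$, so the roots $\alpha_\pm$ are real and distinct, and $g_2 \geq 0$ forces either $h \leq \alpha_-$ or $h \geq \alpha_+$. Because $\alpha_- < q(k+1)-\binom{k+2}{2}$ while (\ref{ineq_c1}) gives $h \geq q(k+1)-\binom{k+2}{2}$, the option $h \leq \alpha_-$ is incompatible with the $c_1$-inequality; thus $h \geq \alpha_+$, and the claim follows by rewriting $\alpha_+$ in the form $q(k+1)-\binom{k+2}{2} + \tfrac{1}{2}(\sqrt{8q-(8k+15)}-1)$.

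The only non-routine step is the justification of the effectivity of $c_2(\caG)$ for a globally generated coherent (not a priori locally free) sheaf, which I would handle in the standard way by choosing enough generic sections so that the kernel of the resulting surjection $\caO_{\mG}^N \twoheadrightarrow \caG$ is locally free outside codimension $\geq 3$, and invoking the usual degeneracy locus formula.
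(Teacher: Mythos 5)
Your proposal matches the paper's argument step for step: global generation of $\caG$ (as a quotient of the trivial bundle $H^0\left(X,\Omega_X^3\right)\otimes\caO_{\mG}$) gives effectivity of $c_2\left(\caG\right)$ and hence $g_2,g_{1,1}\geq 0$ in the Schubert basis, the condition $g_{1,1}\geq 0$ is vacuous because $\beta_{\pm}$ are consecutive integers, and $g_2\geq 0$ together with the $c_1$-inequality (\ref{ineq_c1}) rules out $h\leq\alpha_-$ and forces $h\geq\alpha_+$, which is exactly (\ref{final_ineq_c2}). You are in fact slightly more careful than the paper in flagging that $\caG$ is a priori only a globally generated coherent sheaf rather than a vector bundle, a point the paper passes over silently.
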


\begin{rmk}
In the case $k=1$, the inequality (\ref{final_ineq_c2}) coincides with the results of Lombardi \cite{Lom} for threefolds.
\end{rmk}


\bibliographystyle{abbrv}

\end{document}